\pgfplotsset{compat=newest}
\theoremstyle{plain} 
\newtheorem{theorem}{Theorem} 
\newtheorem{lemma}[theorem]{Lemma}
\newtheorem{corollary}[theorem]{Corollary}
\theoremstyle{definition}
\newtheorem{example}[theorem]{Example}
\theoremstyle{remark}
\newtheorem{question}{Question}
\DeclareMathOperator{\mre}{Re}
\begin{document} 
\title{F. Wiener's trick and an extremal problem for $H^p$} 
\date{\today} 

\author{Ole Fredrik Brevig} 
\address{Department of Mathematics, University of Oslo, 0851 Oslo, Norway} 
\email{obrevig@math.uio.no}

\author{Sigrid Grepstad} 
\address{Department of Mathematical Sciences, Norwegian University of Science and Technology (NTNU), NO-7491 Trondheim, Norway} 
\email{sigrid.grepstad@ntnu.no}

\author{Sarah May Instanes} 
\address{Department of Mathematical Sciences, Norwegian University of Science and Technology (NTNU), NO-7491 Trondheim, Norway} 
\email{sarahmin@stud.ntnu.no}

\begin{abstract}
	For $0<p \leq \infty$, let $H^p$ denote the classical Hardy space of the unit disc. We consider the extremal problem of maximizing the modulus of the $k$th Taylor coefficient of a function $f \in H^p$ which satisfies $\|f\|_{H^p}\leq1$ and $f(0)=t$ for some $0 \leq t \leq 1$. In particular, we provide a complete solution to this problem for $k=1$ and $0<p<1$. We also study F.~Wiener's trick, which plays a crucial role in various coefficient-related extremal problems for Hardy spaces.
\end{abstract}
\subjclass[2020]{Primary 30H10. Secondary 42A05.}
\keywords{Hardy spaces, extremal problems, coefficient estimates}
\thanks{Sigrid Grepstad is supported by Grant 275113 of the Research Council of Norway. Sarah May Instanes is supported by the Olav Thon Foundation through the StudForsk program.}

\maketitle

\section{Introduction}
Let $H^p$ denote the classical Hardy space of analytic functions in the unit disc $\mathbb{D} = \left\{z\in\mathbb{C}\,:\,|z|<1\right\}$. Suppose that $k$ is a positive integer. For $0<p \leq\infty$ and $0 \leq t \leq 1$, consider the extremal problem
\begin{equation} \label{eq:Phi}
	\Phi_k(p,t) = \sup \left\{\mre\frac{f^{(k)}(0)}{k!}\,:\, \|f\|_{H^p}\leq 1 \,\text{ and }\, f(0)=t\,\right\}.
\end{equation}
By a standard normal families argument, there are extremals $f \in H^p$ attaining the supremum in \eqref{eq:Phi} for every $k\geq1$ and every $0 \leq t \leq 1$. A general framework for a class of extremal problems for $H^p$ which includes \eqref{eq:Phi} has been developed by Havinson~\cite{Havinson51}, Kabaila~\cite{Kabaila60}, Macintyre--Rogosinski~\cite{MR50} and Rogosinski--Shapiro~\cite{RS53}. A particular consequence of this theory is that the structure of the extremals is well-known (see Lemma~\ref{lem:structure} below). 

For our extremal problem, it can be deduced directly from Parseval's identity that $\Phi_k(2,t)=\sqrt{1-t^2}$ and that the unique extremal is $f(z)=t+\sqrt{1-t^2}\,z^k$. Similarly, the Schwarz--Pick inequality (see e.g.~\cite[VII.17.3]{Sarason07}) shows that $\Phi_1(\infty,t)=1-t^2$ and that the unique extremal is $f(z)=(t+z)/(1+tz)$. This served as the starting point for Beneteau and Korenblum \cite{BK04}, who studied the extremal problem \eqref{eq:Phi} in the range $1 \leq p \leq \infty$. We will enunciate their results in Section~\ref{sec:Phi0inf} and Section~\ref{sec:Phi1inf}, but for now we present a brief account of their approach.

The first step in \cite{BK04} is to compute $\Phi_1(p,t)$ and identify an extremal function. This is achieved by interpolating between the two cases $p=2$ and $p=\infty$ mentioned above, facilitated by the inner-outer factorization of $H^p$ functions. It follows from the argument that the extremal function thusly obtained is unique. 

The second step in \cite{BK04} is to show that $\Phi_k(p,t)=\Phi_1(p,t)$ for every $k\geq2$ using a trick attributed to F.~Wiener \cite{Bohr14}, which we shall now recall. Set $\omega_k=\exp(2\pi i/k)$ and suppose that $f(z)=\sum_{n\geq0} a_n z^n$. F.~Wiener's trick is based on the transform
\begin{equation} \label{eq:fwienerdef}
	W_k f(z) = \frac{1}{k}\sum_{j=0}^{k-1} f(\omega_k^j z) = \sum_{n=0}^\infty a_{kn}z^{kn}.
\end{equation} 
The triangle inequality yields that $\|W_k f\|_{H^p} \leq \|f\|_{H^p}$ for $f \in H^p$ if $1 \leq p \leq \infty$. Hence, if $f_1$ is an extremal function for $\Phi_1(p,t)$, then $f_k(z)=f_1(z^k)$ is an extremal function for $\Phi_k(p,t)$ and consequently $\Phi_k(p,t)=\Phi_1(p,t)$. Note that this argument does not guarantee that the extremal $f_k$ is unique for $\Phi_k(p,t)$.

We are interested in the extremal problem \eqref{eq:Phi} for $0<p<1$ and whether the extremal identified using F.~Wiener's trick above for $1 \leq p \leq \infty$ is unique. We shall obtain the following general result, which may be of independent interest.

\begin{theorem} \label{thm:fwiener}
	Fix $k\geq2$ and suppose that $0<p\leq \infty$. Let $W_k$ denote the F.~Wiener transform \eqref{eq:fwienerdef}. The inequality
	\[\|W_k f\|_{H^p} \leq \max\big(k^{1/p-1},1\big) \|f\|_{H^p}\]
	is sharp. Moreover, equality is attained if and only if
	\begin{enumerate}
		\item[(a)] $f \equiv 0$ when $0 <p<1$,
		\item[(b)] $W_k f = f$ when $1<p<\infty$.
	\end{enumerate}
\end{theorem}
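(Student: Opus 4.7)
My plan is to bound $\|W_k f\|_{H^p}$ using the (quasi-)triangle inequality applied to the defining average \eqref{eq:fwienerdef}, noting that each map $z \mapsto f(\omega_k^j z)$ is an $H^p$-isometry. When $1 \leq p \leq \infty$, the ordinary triangle inequality immediately gives $\|W_k f\|_{H^p} \leq \|f\|_{H^p}$. When $0 < p < 1$, I would instead invoke the $p$-triangle inequality $\|g_1 + \cdots + g_k\|_{H^p}^p \leq \sum_j \|g_j\|_{H^p}^p$ with $g_j = k^{-1} f(\omega_k^j \cdot)$, which yields
\[
\|W_k f\|_{H^p}^p \leq k \cdot k^{-p}\, \|f\|_{H^p}^p = k^{1-p}\,\|f\|_{H^p}^p,
\]
so that the stated bound $k^{1/p-1}$ appears after taking $p$-th roots.

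For sharpness in the range $0 < p < 1$ I would test the inequality on the peaking family $f_r(z) = (1 - rz)^{-2/p}$ as $r \nearrow 1$. A Poisson-kernel computation gives $\|f_r\|_{H^p}^p = 1/(1-r^2)$, while the rotated copies $f_r(\omega_k^j z)$ concentrate their boundary $L^p$-mass near the $k$ distinct points $\omega_k^{-j}$. A dominant-term estimate on arcs around each peak should then yield $\|W_k f_r\|_{H^p}^p/\|f_r\|_{H^p}^p \to k^{1-p}$, confirming that $k^{1/p-1}$ is optimal; the delicate point here---and, I expect, the main technical obstacle---is controlling the cross-interaction between a dominant peak and the bounded contributions from the other rotations so that it contributes only $o(1)$ as $r \to 1$.

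Finally, the equality cases come from tracing when the (quasi-)triangle inequality is tight. For (a), equality in the pointwise inequality $(|a_1| + \cdots + |a_k|)^p \leq |a_1|^p + \cdots + |a_k|^p$ valid for $0 < p < 1$ is strict as soon as two $a_j$ are nonzero, so equality in Theorem~\ref{thm:fwiener} forces the boundary traces $f(\omega_k^j e^{i\theta})$, $j = 0,\dots,k-1$, to have pairwise disjoint support a.e.; since a nontrivial $H^p$ function has nonzero boundary values almost everywhere, the union of the $k$ rotated zero sets remains null and a.e.\ $\theta$ has all $k$ traces nonzero, which is impossible unless $f \equiv 0$. For (b), strict convexity of $L^p$ with $1 < p < \infty$ forces equality in Minkowski's inequality to mean the summands are nonnegative scalar multiples of a common function; since the $k$ summands all share the same $H^p$ norm, they must coincide a.e., whence $f(\omega_k z) = f(z)$, the Taylor expansion of $f$ is supported on multiples of $k$, and $W_k f = f$.
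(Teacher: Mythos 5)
Your proposal follows essentially the same route as the paper: the (quasi-)triangle inequality for the upper bound, a family concentrating at a single boundary point (your $(1-rz)^{-2/p}$ in place of the paper's $(z-(1+\varepsilon))^{-1/p}$) with dominant-term estimates on arcs of length $2\pi/k$ for sharpness when $0<p<1$, the disjoint-boundary-support argument (the content of Lemma~\ref{lem:Delta01}) for (a), and strict convexity of $L^p$ plus comparison of Taylor coefficients for (b). The cross-interaction you flag is harmless for exactly the reason you anticipate---away from its own peak each rotated copy is uniformly bounded on the relevant arc, contributing $O(1)$ against a main term that blows up---so your argument is correct.
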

The upper bound in the estimate is easily deduced from the triangle inequality. Hence, the novelty of Theorem~\ref{thm:fwiener} is that the inequality is sharp for $0<p<1$, and the statements (a) and (b). In Section~\ref{sec:fwiener}, we also present examples of functions in $H^1$ and $H^\infty$ which attain equality in Theorem~\ref{thm:fwiener}, but for which $W_k f \neq f$. However, we will conversely establish that if both $f$ and $W_k f$ are inner functions, then $f = W_k f$.

To illustrate the role played by the F.~Wiener transform in various coefficient related extremal problems, we first recall that the estimate $\|W_k f\|_\infty \leq \|f\|_\infty$ was originally used by F.~Wiener to resolve a problem posed by H.~Bohr \cite{Bohr14} and compute the so-called Bohr radius for $H^\infty$. We also know from \cite[Sec.~1.7]{MSUTV15} that the Krzy\.{z} conjecture on the maximal magnitude of the $k$th coefficient in the power series expansion of a non-vanishing function with $\|f\|_\infty=1$ is equivalent to the assertion that if $f$ is an extremal for the corresponding extremal problem, then $f = W_k f$. As far as we are aware, the Krzy\.{z} conjecture remains open for $k\geq6$.

Theorem~\ref{thm:fwiener} shows that the extremal for $\Phi_k(p,t)$ is unique when $1<p<\infty$. We shall see in Section~\ref{sec:Phi1inf} that the extremal problem $\Phi_k(p,t)$ with $k\geq2$ and $1 \leq p \leq \infty$ has a unique extremal except for when $p=1$ and $0\leq t < 1/2$.

In the range $0<p<1$ with $k=1$, the extremal problem \eqref{eq:Phi} has been studied by Connelly \cite[Sec.~4]{Con17}, who resolved the problem in the cases $0 \leq t < 2^{-1/p}$ and $2^{-1/p}\sqrt{p}(2-p)^{1/p-1/2}<t\leq1$. Connelly also states conjectures on the behavior of $\Phi_1(p,t)$ in the range $2^{-1/p} \leq t \leq 2^{-1/p}\sqrt{p}(2-p)^{1/p-1/2}$. The conjectures are based on numerical analysis (see \cite[Sec.~5]{Con17}).

In Section~\ref{sec:Phi0inf}, we will extend Connelly's result to the full range $0 \leq t \leq1$. Our result demonstrates that for each $0<p<1$ there is a unique $0<t_p<1/2$ such that the extremal for $\Phi_1(p,t_p)$ is not unique, thereby confirming the above-mentioned conjectures.

\begin{figure}
        \centering
        \begin{tikzpicture}
                \begin{axis}
                        [axis equal image,
                        axis lines=middle,
                        axis line style=thin,
                        axis on top,
                        xmin=0,
                        xmax=1.1,
						xtick={0.25,0.5,0.75,1},
						xticklabels={\sfrac{1}{4},\sfrac{1}{2},\sfrac{3}{4},1},
                        ymin=0,
                        ymax=1.43,
                        ytick={0.5,1,1.299},
                        yticklabels={\sfrac{1}{2},1,\sfrac{3${\sqrt{3}}\,$}{4},},
                        ylabel={$\Phi_1(p,t)$},
                        xlabel=$t$,
                        every axis x label/.style={ at={(ticklabel* cs:1.025)}, anchor=west,},
                        every axis y label/.style={ at={(ticklabel* cs:1.025)}, anchor=south,},
                        axis line style={->}, x post scale = 2.31374]
						\addplot[thin, densely dotted, color=blue, domain=0:0.5652, samples=500] {1.299};
						\addplot[thin, color=red, domain=0:1, samples=500] {1-x^2}; 
						\addplot[thin, color=orange, domain=0:1, samples=500] {sqrt(1-x^2)}; 
						\addplot[thin, color=green, domain=0:0.5, samples=250] {1}; 
						\addplot[thin, color=green, domain=0.5:1, samples=250] {2*sqrt(x-x^2)}; 
                        \input{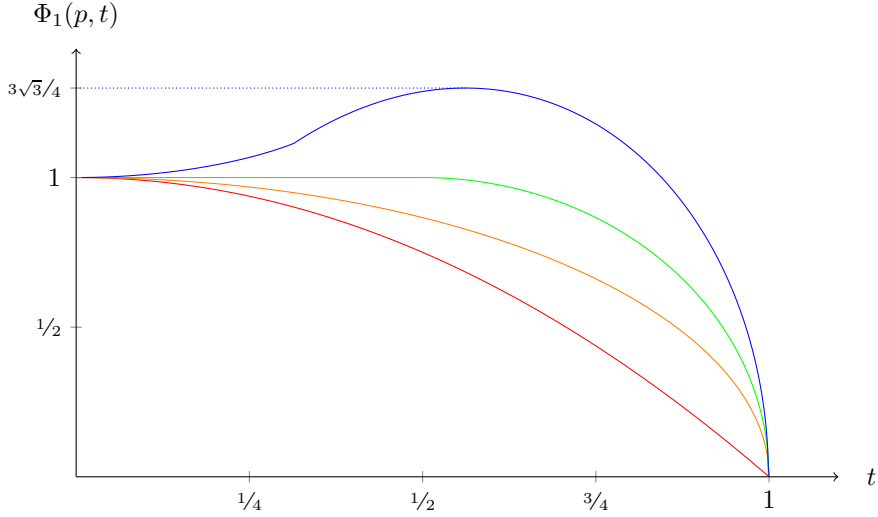}
                \end{axis}
        \end{tikzpicture}
		\caption{Plot of the curves $t\mapsto \Phi_1(p,t)$ for {\color{blue}$p=1/2$}, {\color{green}$p=1$}, {\color{orange}$p=2$} and {\color{red}$p=\infty$}.}
		\label{fig:Phi1}
\end{figure}

Brevig and Saksman \cite{BS20} have recently studied the extremal problem
\[\Psi_k(p) = \sup \left\{\mre\frac{f^{(k)}(0)}{k!}\,:\, \|f\|_{H^p}\leq 1 \right\}\]
for $0<p<1$. It is observed in \cite[Sec.~5.3]{BS20} that $\Psi_k(p) = \max_{0\leq t \leq 1} \Phi_k(p,t)$. In particular, the maxima of $\Phi_1(p,t)$ for $0 \leq t \leq 1$ is
\[\Psi_1(p)=(1-p/2)^{1/p}\frac{2}{\sqrt{p(2-p)}}\]
and this is attained for $t=(1-p/2)^{1/p}$. From the main result in \cite{BK04}, it is easy to see that $t \mapsto \Phi_1(p,t)$ is a decreasing function from $\Phi_1(p,0)=1$ to $\Phi_1(p,1)=0$ when $1 \leq p \leq \infty$. Similarly, our main result shows that $\Phi_1(p,t)$ is increasing from $\Phi_1(p,0)=1$ to the maxima mentioned above, then decreasing to $\Phi_1(p,1)=0$. Figure~\ref{fig:Phi1} contains the plot of $t \mapsto \Phi_1(p,t)$ for several values $0<p\leq\infty$, which illustrates this difference between $0<p<1$ and $1 \leq p \leq \infty$.

Another difference between $0<p<1$ and $1 \leq p \leq \infty$ appears when we consider $k\geq2$. Recall that in the latter case, we have $\Phi_k(p,t)=\Phi_1(p,t)$ for every $k\geq2$ and every $0\leq t \leq1$. In the former case, we only get from Theorem~\ref{thm:fwiener} that
\begin{equation} \label{eq:Phi1k}
	\Phi_1(p,t) \leq \Phi_k(p,t) \leq k^{1/p-1} \Phi_1(p,t).
\end{equation}
Theorem~\ref{thm:fwiener} also shows that the upper bound in \eqref{eq:Phi1k} is attained if and only if $t=1$, since trivially $\Phi_1(p,1)=0$ for every $0<p\leq\infty$. However, by adapting an example due to Hardy and Littlewood \cite{HL32}, it is easy to see that if $0<p<1$ and $0\leq t<1$ are fixed, then the exponent ${1/p-1}$ in \eqref{eq:Phi1k} cannot be improved as $k\to\infty$. In the final section of the paper, we present some evidence that the lower bound in \eqref{eq:Phi1k} can be attained for sufficiently large $t$, if $k\geq2$ and $0<p<1$ are fixed.

\subsection*{Organization} The present paper is organized into five additional sections and one appendix. In Section~\ref{sec:prelim}, we collect some preliminary results pertaining to $H^p$ and the structure of extremals for $\Phi_k(p,t)$. Section~\ref{sec:fwiener} is devoted to F.~Wiener's trick and the proof of Theorem~\ref{thm:fwiener}. A complete solution to the extremal problem $\Phi_1(p,t)$ for $0<p\leq\infty$ and $0\leq t \leq 1$ is presented in Section~\ref{sec:Phi0inf}. In Section~\ref{sec:Phi1inf}, we consider $\Phi_k(p,t)$ for $k\geq2$ and $1\leq p \leq \infty$ and study when the extremal is unique. Section~\ref{sec:Phi201} contains some remarks on $\Phi_k(p,t)$ for $k\geq2$ and $0<p<1$. Appendix~\ref{app:equation} contains the proof of a crucial lemma needed to resolve the extremal problem $\Phi_1(p,t)$ for $0<p<1$.

\subsection*{Acknowledgements} The authors extend their gratitude to Eero Saksman for a helpful discussion pertaining to Theorem~\ref{thm:fwiener}. They also thank the referee for a careful reading of the paper.

\section{Preliminaries} \label{sec:prelim}
Recall that for $0<p<\infty$, the Hardy space $H^p$ consists of the analytic functions $f$ in $\mathbb{D}$ for which the limit of integral means
\[\|f\|_{H^p}^p = \lim_{r\to 1^-} \int_0^{2\pi} |f(r e^{i\theta})|^p\,\frac{d\theta}{2\pi}\]
is finite. $H^\infty$ is the space of bounded analytic functions in $\mathbb{D}$, endowed with the norm $\|f\|_{H^\infty} = \sup_{|z|<1}|f(z)|$. It is well-known (see e.g.~\cite{Duren}) that $H^p$ is a Banach space when $1\leq p \leq \infty$ and a quasi-Banach space when $0<p<1$.

In the Banach space range $1 \leq p \leq \infty$, the triangle equality is
\begin{equation} \label{eq:triangle1}
	\|f+g\|_{H^p} \leq \|f\|_{H^p} + \|g\|_{H^p}.
\end{equation}
The Hardy space $H^p$ is strictly convex when $1<p<\infty$, which means that it is impossible to attain equality in \eqref{eq:triangle1} unless $g\equiv0$ or $f=\lambda g$ for a non-negative constant $\lambda$. $H^p$ is not strictly convex for $p=1$ and $p=\infty$, so in this case there are other ways to attain equality in \eqref{eq:triangle1}. In the range $0<p<1$, the triangle inequality takes the form
\begin{equation} \label{eq:triangle2}
	\|f+g\|_{H^p}^p \leq \|f\|_{H^p}^p + \|g\|_{H^p}^p,
\end{equation}
so here $H^p$ is not even locally convex \cite{DRS69}. Our first goal is to establish that the triangle inequality \eqref{eq:triangle2} is not attained unless $f\equiv0$ or $g\equiv0$. This result is probably known to experts, but we have not found it in the literature. 

If $f \in H^p$ for some $0<p\leq\infty$, then the boundary limit function
\begin{equation} \label{eq:blf}
	f^\ast(e^{i\theta}) = \lim_{r\to 1^-} f(re^{i\theta})
\end{equation}
exists for almost every $\theta$. Moreover, $f^\ast\in L^p= L^p([0,2\pi])$ and
\[\|f\|_{H^p}=\|f^\ast\|_{L^p}=\left(\int_0^{2\pi}\left|f^\ast(e^{i\theta})\right|^p\,\frac{d\theta}{2\pi}\right)^\frac{1}{p}\]
if $0<p<\infty$ and $\|f\|_{H^\infty}=\operatorname{ess\,sup}_{\theta} |f^\ast(e^{i\theta})|$. For simplicity, we henceforth omit the asterisk and write $f^\ast=f$ with the limit \eqref{eq:blf} in mind.

\begin{lemma} \label{lem:Delta01}
	Fix $0<p<1$ and suppose that $f,g \in H^p$. If
	\[\|f+g\|_{H^p}^p = \|f\|_{H^p}^p + \|g\|_{H^p}^p\]
	then either $f \equiv 0$ or $g \equiv 0$.
\end{lemma}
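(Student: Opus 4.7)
The strategy is to promote the hypothesized integrated equality to a pointwise equality on the unit circle, analyze when the underlying pointwise inequality can be tight, and then use the fact that a nonzero $H^p$ function cannot vanish on a set of positive measure on the boundary.

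First I would record the pointwise inequality. For any complex numbers $a, b$ and $0 < p < 1$, the triangle inequality combined with subadditivity of $x \mapsto x^p$ on $[0,\infty)$ gives
\[ |a+b|^p \leq \bigl(|a|+|b|\bigr)^p \leq |a|^p + |b|^p, \]
and the second of these inequalities is strict whenever both $a \neq 0$ and $b \neq 0$. Integrating this inequality against $d\theta/(2\pi)$ over the circle and invoking $\|h\|_{H^p}^p = \int_0^{2\pi} |h(e^{i\theta})|^p\,d\theta/(2\pi)$ for boundary values of $H^p$ functions, we obtain
\[ \|f+g\|_{H^p}^p \leq \int_0^{2\pi}\bigl(|f(e^{i\theta})|^p + |g(e^{i\theta})|^p\bigr)\,\frac{d\theta}{2\pi} = \|f\|_{H^p}^p + \|g\|_{H^p}^p. \]
Under the hypothesis of the lemma, equality must hold in the integrand almost everywhere, i.e., $|f(e^{i\theta})+g(e^{i\theta})|^p = |f(e^{i\theta})|^p + |g(e^{i\theta})|^p$ for a.e. $\theta$.

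By the strictness observation above, this forces, for almost every $\theta$, either $f(e^{i\theta}) = 0$ or $g(e^{i\theta}) = 0$. Equivalently, the boundary sets $E_f = \{\theta : f(e^{i\theta}) \neq 0\}$ and $E_g = \{\theta : g(e^{i\theta}) \neq 0\}$ are essentially disjoint. Here I would invoke the classical fact that if $h \in H^p$ with $h \not\equiv 0$, then $\log|h| \in L^1$ on the unit circle, and in particular $h \neq 0$ almost everywhere on the boundary (see e.g. Theorem~2.2 in Duren). Applied to $f$ and $g$ this gives the desired dichotomy: if neither $f$ nor $g$ were identically zero, then both $E_f$ and $E_g$ would have full measure $2\pi$, contradicting their essential disjointness.

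The argument is short and I do not anticipate a genuine obstacle; the only delicate ingredient is the almost-everywhere non-vanishing of boundary values of nonzero $H^p$ functions, which is standard and can be cited from Duren's monograph. The same scheme would also yield the equality cases stated for $1 < p < \infty$ via strict convexity of $H^p$, but that is not needed here.
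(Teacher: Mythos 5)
Your proof is correct and follows essentially the same route as the paper: integrate the pointwise inequality $|z+w|^p\leq|z|^p+|w|^p$, observe that equality forces $f(e^{i\theta})g(e^{i\theta})=0$ almost everywhere, and conclude via the standard fact (Duren, Thm.~2.2) that a nonzero $H^p$ function cannot vanish on a boundary set of positive measure. No gaps.
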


\begin{proof}
	We begin by looking at equality in the triangle inequality for $L^p$ in the range $0<p<1$. Here we have
	\begin{align*}
		\|f+g\|_{L^p}^p &= \int_0^{2\pi} \left|f(e^{i\theta})+g(e^{i\theta})\right|^p\,\frac{d\theta}{2\pi} \\
		&\leq \int_0^{2\pi} |f(e^{i\theta})|^p+|g(e^{i\theta})|^p\,\frac{d\theta}{2\pi} = \|f\|_{L^p}^p + \|g\|_{L^p}^p.
	\end{align*}
	We used the elementary estimate $|z+w|^p\leq|z|^p+|w|^p$ for complex numbers $z,w$ and $0<p<1$. It is easily verified that this estimate is attained if and only if $zw=0$. Consequently, 
	\[\|f+g\|_{L^p}^p = \|f\|_{L^p}^p+\|g\|_{L^p}^p\]
	if and only if $f(e^{i\theta})g(e^{i\theta})=0$ for almost every $\theta$. It is well-known (see \cite[Thm.~2.2]{Duren}) that the only function $h \in H^p$ whose boundary limit function \eqref{eq:blf} vanishes on a set of positive measure is $h \equiv 0$. Hence we conclude that either $f \equiv 0$ or $g \equiv 0$.
\end{proof}

Let us next establish a standard result on the structure of the extremals for the extremal problem \eqref{eq:Phi}. The first step is the following basic result.
\begin{lemma} \label{lem:norm1}
	If $f\in H^p$ is extremal for $\Phi_k(p,t)$, then $\|f\|_{H^p}=1$.
\end{lemma}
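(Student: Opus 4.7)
The plan is to argue by contradiction: if an extremal $f$ had norm strictly less than $1$, then we could perturb it to produce another admissible function with a strictly larger real $k$-th Taylor coefficient. The proof naturally splits into the edge case $t=1$ and the generic case $0\le t<1$.

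Suppose first that $t=1$. Applying Jensen's inequality to the subharmonic function $\log|f|$ in $\mathbb{D}$, we obtain $|f(0)|^p \le \|f\|_{H^p}^p$ for every $f\in H^p$ and every $0<p\le\infty$. Since $f(0)=1$, this forces $\|f\|_{H^p}\ge 1$, and the admissibility constraint $\|f\|_{H^p}\le1$ gives equality.

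For the main case $0\le t<1$, assume towards contradiction that $\|f\|_{H^p}=c$ with $c<1$, and write $f(z)=t+\sum_{n\ge 1}a_n z^n$. I would consider the perturbation
\[
g(z) = f(z) + \varepsilon z^k
\]
with $\varepsilon>0$ small. Clearly $g(0)=t$, and the real part of the $k$-th Taylor coefficient of $g$ equals $\mre a_k+\varepsilon>\mre a_k = \Phi_k(p,t)$. It remains to confirm that $\|g\|_{H^p}\le 1$ for sufficiently small $\varepsilon$. This is where the triangle inequalities \eqref{eq:triangle1} and \eqref{eq:triangle2} come in: for $1\le p\le\infty$ we get $\|g\|_{H^p}\le c+\varepsilon$, while for $0<p<1$ we get $\|g\|_{H^p}^p\le c^p+\varepsilon^p$. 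Since $c<1$, either bound allows us to choose $\varepsilon>0$ so small that $\|g\|_{H^p}\le 1$. Then $g$ is an admissible candidate whose real $k$-th coefficient strictly exceeds $\Phi_k(p,t)$, contradicting the extremality of $f$.

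There is no real obstacle here; the argument is routine. The only minor care needed is to keep track of the two versions of the triangle inequality depending on whether $p\ge1$ or $p<1$, but in both ranges a small enough choice of $\varepsilon$ works. Note also that Lemma~\ref{lem:Delta01} is not needed for this statement, as the strict inequality $c<1$ leaves room for a perturbation regardless of whether $f$ saturates the $p$-triangle inequality with $\varepsilon z^k$.
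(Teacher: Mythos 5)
Your argument is correct and is essentially the paper's own proof: perturb the extremal by $\varepsilon z^k$, apply the triangle inequality \eqref{eq:triangle1} for $1\le p\le\infty$ and \eqref{eq:triangle2} for $0<p<1$, and contradict extremality. The separate treatment of $t=1$ is harmless but unnecessary, since the perturbation argument applies verbatim for all $0\le t\le 1$ (for $t=1$ the hypothesis $\|f\|_{H^p}<1$ is simply vacuous).
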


\begin{proof}
	Suppose that $f\in H^p$ is extremal for $\Phi_k(p,t)$ but that $\|f\|_{H^p}<1$. For $\varepsilon>0$, set $g(z) = f(z)+\varepsilon z^k$. Note that $g(0)=f(0)=t$ for any $\varepsilon>0$. If $1 \leq p \leq \infty$, then
	\begin{align*}
		\|g\|_{H^p} &\leq \|f\|_{H^p}+\varepsilon < 1
		\intertext{for sufficiently small $\varepsilon>0$. If $0<p<1$, then}
		\|g\|_{H^p}^p &\leq \|f\|_{H^p}^p + \varepsilon^p <1,
	\end{align*}
	again for sufficiently small $\varepsilon>0$, so $\|g\|_{H^p}<1$. In both cases we find that
	\[\frac{g^{(k)}(0)}{k!} = \frac{f^{(k)}(0)}{k!}+\varepsilon,\]
	which contradicts the extremality of $f$ for $\Phi_k(p,t)$.
\end{proof}

Let $(n_j)_{j=1}^k$ denote a sequence of distinct non-negative integers and let $(w_j)_{j=1}^k$ denote a sequence of complex numbers. A special case of the Carath\'eodory--Fej\'er problem is to determine the infimum of $\|f\|_{H^p}$ over all $f \in H^p$ which satisfy
\begin{equation} \label{eq:CF}
	\frac{f^{(n_j)}}{n_j!}(0)=w_j,
\end{equation}
for $j=1,\ldots, k$. Set $k=\max_{1 \leq j \leq k} n_j$. If $f$ is an extremal for the Carath\'eodory--Fej\'er problem \eqref{eq:CF}, then there are complex numbers $|\lambda_j|\leq1$ for $j=1,\ldots,k$ and a constant $C$ such that
\begin{equation} \label{eq:CFstructure}
	f(z) = C \prod_{j=1}^l \frac{\lambda_j-z}{1-\overline{\lambda_j} z} \prod_{j=1}^k (1-\overline{\lambda_j} z)^{2/p}
\end{equation}
for some $0 \leq l \leq k$, and the strict inequality $|\lambda_j|<1$ holds for $0< j \leq l$. In \eqref{eq:CFstructure} and in similar formulas to follow, we adopt the convention that in the case $l=0$ the first product is empty and considered to be equal to $1$.

For $1 \leq p \leq \infty$, this result is independently due to Macintyre--Rogosinski \cite{MR50} and Havinson \cite{Havinson51}, while in the range $0<p<1$ the result is due to Kabaila \cite{Kabaila60}. An exposition of these results can be found in \cite[Ch.~8]{Duren} and \cite[pp.~82--85]{Khavinson86}, respectively.

Using Lemma~\ref{lem:norm1}, we can establish that the extremals of the extremal problem $\Phi_k(p,t)$ have to be of the same form.

\begin{lemma} \label{lem:structure}
	If $f\in H^p$ is extremal for $\Phi_k(p,t)$, then there are complex numbers $|\lambda_j|\leq1$ for $j=1,\ldots,k$ and a constant $C$ such that
	\[f(z) = C \prod_{j=1}^l \frac{\lambda_j-z}{1-\overline{\lambda_j} z} \prod_{j=1}^k (1-\overline{\lambda_j} z)^{2/p}.\]
	for some $0 \leq l \leq k$, and the strict inequality $|\lambda_j|<1$ holds for $0< j \leq l$.
\end{lemma}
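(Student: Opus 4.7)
My approach is to recognize any extremal $f$ for $\Phi_k(p,t)$ as an extremal for a two-point Carath\'eodory--Fej\'er problem, and then invoke the cited structure theorem \eqref{eq:CFstructure}.

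First, I would normalize. By Lemma~\ref{lem:norm1}, an extremal $f$ satisfies $\|f\|_{H^p}=1$. A rotation argument shows we may also assume that $\alpha := f^{(k)}(0)/k!$ is a non-negative real number: writing $\alpha = |\alpha|e^{i\theta}$ and replacing $f(z)$ by $f(e^{-i\theta/k}z)$ preserves $f(0)=t$ and $\|f\|_{H^p}$ while making the $k$th Taylor coefficient equal to $|\alpha| \geq \mre\alpha$, so by extremality $\alpha = |\alpha| = \Phi_k(p,t)$.

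Next, I would set up the associated Carath\'eodory--Fej\'er problem: minimize $\|g\|_{H^p}$ over $g\in H^p$ satisfying the two interpolation conditions
\[ g(0) = t, \qquad \frac{g^{(k)}(0)}{k!} = \Phi_k(p,t). \]
The function $f$ is a candidate with $\|f\|_{H^p}=1$, so the infimum is at most $1$. The key claim is that $f$ is in fact extremal, i.e., the infimum equals $1$. Suppose to the contrary that some competitor $g$ has $\|g\|_{H^p}<1$. Then for $\varepsilon>0$ consider $h(z)=g(z)+\varepsilon z^k$. The argument from the proof of Lemma~\ref{lem:norm1} shows that $\|h\|_{H^p}\leq 1$ for all sufficiently small $\varepsilon>0$, using either \eqref{eq:triangle1} when $p \geq 1$ or \eqref{eq:triangle2} when $0<p<1$. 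Moreover $h(0)=t$ and $\mre h^{(k)}(0)/k! = \Phi_k(p,t)+\varepsilon$, which contradicts the definition of $\Phi_k(p,t)$. Hence the infimum is $1$ and $f$ is extremal for this Carath\'eodory--Fej\'er problem.

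Finally, I would apply the Macintyre--Rogosinski/Havinson result (for $1\leq p \leq \infty$) and the Kabaila result (for $0<p<1$) with interpolation data $n_1=0$, $n_2=k$, $w_1=t$, $w_2=\Phi_k(p,t)$, as cited in \eqref{eq:CFstructure}. Since $\max(n_1,n_2)=k$, the representation formula yields precisely the claimed form of $f$. The only potential obstacle is the perturbation step, but since the quasi-triangle inequality \eqref{eq:triangle2} contributes only an $\varepsilon^p$ term, shrinking $\varepsilon$ suffices exactly as in Lemma~\ref{lem:norm1}, so there is no real difficulty.
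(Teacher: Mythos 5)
Your proposal is correct and follows essentially the same route as the paper: use Lemma~\ref{lem:norm1} to see that any competitor for the two-point Carath\'eodory--Fej\'er problem with data $f(0)=t$ and $f^{(k)}(0)/k!=\Phi_k(p,t)$ of norm less than $1$ would contradict extremality, so $f$ is a Carath\'eodory--Fej\'er extremal and \eqref{eq:CFstructure} applies. Your additional rotation step justifying that the $k$th coefficient of an extremal is automatically real and non-negative is a small refinement the paper leaves implicit, but the argument is otherwise the same.
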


\begin{proof}
	Suppose that $f$ is extremal for $\Phi_k(p,t)$ and consider the Carath\'eodory--Fej\'er problem with conditions
	\begin{equation} \label{eq:CFcond}
		f(0)=t \qquad \text{and} \qquad \frac{f^{(k)}(0)}{k!} = \Phi_k(p,t) .
	\end{equation}
	We claim that $f$ is an extremal for the Carath\'eodory--Fej\'er problem \eqref{eq:CFcond}. If it is not, then there must be some $f \in H^p$ with $\|f\|_{H^p}<1$ which satisfies \eqref{eq:CFcond}. However, this contradicts Lemma~\ref{lem:norm1}. Hence the extremal is of the stated form by \eqref{eq:CFstructure}.
\end{proof}

\section{F.~Wiener's trick} \label{sec:fwiener}
Recall from \eqref{eq:fwienerdef} that if $f(z)=\sum_{n\geq0} a_n z^n$ and $\omega_k = \exp(2\pi i /k)$, then
\[W_k f(z) = \frac{1}{k}\sum_{j=0}^{k-1} f(\omega_k^j z) = \sum_{n=0}^\infty a_{kn}z^{kn}.\]
We begin by giving two examples showing that $\|W_k f \|_{H^p}=\|f\|_{H^p}$ may occur for $f$ such that $W_k f \neq f$ when $p=1$ or $p=\infty$.

\begin{example}
	Let $k\geq2$ and consider $f(z)=(1+z)^{2k}$ in $H^1$. By the binomial theorem, we find that
	\begin{align*}
		f(z) &= \sum_{n=0}^{2k} \binom{2k}{n} z^n, \\
		W_kf(z) &= 1 + \binom{2k}{k}z^k + z^{2k}.
	\end{align*}
	Note that $f \neq W_k f$ since $k\geq2$. By another application of the binomial theorem and a well-known identity for the central binomial coefficient, we find that
	\[\|f\|_{H^1} = \|f^{1/2}\|_{H^2}^{2} = \sum_{n=0}^k \binom{k}{n}^2 = \binom{2k}{k}.\]
	Moreover,
	\[\binom{2k}{k} = \int_0^{2\pi} W_k f(e^{i\theta}) \,\overline{e^{ik\theta}}\,\frac{d\theta}{2\pi} \leq \|W_k f\|_{H^1}\]
	by the triangle inequality. Hence
	\[\binom{2k}{k} \leq \|W_k f\|_{H^1} \leq \|f\|_{H^1} = \binom{2k}{k},\]
	so $\|W_k f\|_{H^1} = \|f\|_{H^1}$.
\end{example}

\begin{example}
	Let $k\geq2$ and consider $f(z)=(1+z^k)^2 - z(1-z^k)^2$ in $H^\infty$. It is clear that $W_k f (z) = (1+z^k)^2 \neq f(z)$ since $k\geq2$. Moreover $\|W_k f\|_{H^\infty}=4$. The supremum is attained for $z=\omega_k^j$ for $j=0,1,\ldots,k-1$. We next compute
	\[f(e^{i\theta}) = (1+e^{ik\theta})^2 - e^{i\theta}(1-e^{ik\theta})^2 = 4 e^{ik\theta}\left(\cos^2(k\theta/2)+e^{i\theta}\sin^2(k\theta/2)\right).\]
	Consequently, $\|f\|_{H^\infty}=4$ and here the supremum is attained for $z=\omega_{2k}^{j}$ for $j=0,1,\ldots,2k-1$.
\end{example}

\begin{proof}[Proof of Theorem~\ref{thm:fwiener}]
	It follows from the triangle inequality \eqref{eq:triangle1} that 
	\begin{equation} \label{eq:fwiener1inf}
		\|W_k f\|_{H^p} \leq \|f\|_{H^p}
	\end{equation}
	for every $f \in H^p$ if $1 \leq p \leq \infty$. In the range $0<p<1$, we get from the triangle inequality \eqref{eq:triangle2} the estimate
	\begin{equation} \label{eq:fwiener01}
		\|W_k f\|_{H^p} \leq k^{1/p-1} \|f\|_{H^p}
	\end{equation}
	for every $f \in H^p$. Combining \eqref{eq:fwiener1inf} and \eqref{eq:fwiener01}, we have established that
	\[\|W_k f\|_{H^p} \leq \max\big(k^{1/p-1},1\big)\|f\|_{H^p}.\]
	This is trivially attained for $f(z)=z^k$ when $1 \leq p \leq \infty$. We need to show that the upper bound $k^{1/p-1}$ cannot be improved when $0<p<1$ to finish proof of the first part of the theorem. 
	
	Let $\varepsilon>0$ and consider $f_\varepsilon(z) = \left(z-(1+\varepsilon)\right)^{-1/p}$. Clearly $\|f_\varepsilon\|_{H^p} \to \infty$ as $\varepsilon\to 0^+$. Moreover
	\begin{align*}
		\|f_\varepsilon\|_{H^p}^p &= \int_0^{2\pi} \frac{1}{|e^{i\theta}-(1+\varepsilon)|}\,\frac{d\theta}{2\pi}\\
		&\leq \int_{|\theta| < \pi/k} \frac{1}{|e^{i\theta}-(1+\varepsilon)|}\,\frac{d\theta}{2\pi} + \int_{|\theta|\geq \pi/k} \frac{6}{\theta^2} \,\frac{d\theta}{2\pi} \\
		&\leq \int_{|\theta| < \pi/k} \frac{1}{|e^{i\theta}-(1+\varepsilon)|}\,\frac{d\theta}{2\pi} + \frac{6k}{\pi^2},
	\end{align*}
	from which we conclude that
	\begin{equation} \label{eq:mih}
		 \|f_\varepsilon\|_{H^p}^p = \int_{|\theta| < \pi/k} \frac{1}{|e^{i\theta}-(1+\varepsilon)|}\,\frac{d\theta}{2\pi} + O(1).
	\end{equation}
	Furthermore,
	\begin{align*}
		\|W_k f_\varepsilon\|_{H^p}^p &= \sum_{j=0}^{k-1} \int_{|\theta-2\pi j/k|<\pi/k} \left|\sum_{l=0}^{k-1} \frac{f_\varepsilon\big(e^{i(\theta+2\pi l/k)}\big)}{k}\right|^p \,\frac{d\theta}{2\pi} \\
		&\geq k^{-p} \sum_{j=0}^{k-1} \left(\int_{|\theta-2\pi j/k|<\pi/k} \big|f_\varepsilon\big(e^{i(\theta+2\pi j/k)}\big)\big|^p\,\frac{d\theta}{2\pi} - \frac{6k^2}{\pi^2}\right) \\
		&= k^{-p+1} \int_{|\theta| < \pi/k} \frac{1}{|e^{i\theta}-(1+\varepsilon)|}\,\frac{d\theta}{2\pi} - \frac{6k^{-p+3}}{\pi^2}.
	\end{align*}
	By \eqref{eq:mih} we find that
	\[\lim_{\varepsilon\to0^{+}}\frac{\|W_k f_\varepsilon\|_{H^p}^p}{\|f_\varepsilon\|_{H^p}^p} \geq k^{1-p}.\]
	Hence, the constant $k^{1/p-1}$ in \eqref{eq:fwiener01} cannot be replaced by any smaller quantity.	
	
	We next want to show that (a) and (b) holds. For a function $f \in H^p$, define $f_j(z)=f(\omega_k^j z)$ for $j=0,1,\ldots,k-1$ and recall that $\|f\|_{H^p} = \|f_j\|_{H^p}$.
	
	We begin with (a). Suppose that $\|W_k f\|_{H^p}=k^{1/p-1}\|f\|_{H^p}$, which we can reformulate as
	\[\|f_0 + f_1 + \cdots + f_{k-1}\|_{H^p}^p = \|f_0\|_{H^p}^p+\|f_1\|_{H^p}^p+\cdots+\|f_{k-1}\|_{H^p}^p.\]
	By Lemma~\ref{lem:Delta01}, the triangle inequality can be attained if and only if at least $k-1$ of the $k$ functions $f_j$ are identically equal to zero. Evidently this is possible if and only if $f\equiv0$.
	
	For (b), we suppose that $f\in H^p$ is such that $\|W_k f\|_{H^p}=\|f\|_{H^p}$. We need to prove that $W_k f = f$. If $f \equiv 0$ there is nothing to do. As in the proof of (a), we note that $\|W_k f\|_{H^p}=\|f\|_{H^p}$ can be reformulated as
	\[\|f_0 + f_1 + \cdots + f_{k-1}\|_{H^p} = \|f_0\|_{H^p}+\|f_1\|_{H^p}+\cdots+\|f_{k-1}\|_{H^p}.\]
	Viewing $H^p$ as a subspace of $L^p$, the strict convexity of the latter implies that there are non-negative constants $\lambda_j$ for $j=1,2,\ldots,k-1$ such that
	\[f = f_0 = \lambda_1 f_1 = \cdots = \lambda_{k-1} f_{k-1}.\]
	We shall only look at $f = \lambda_1 f_1$ which for $f(z)=\sum_{n\geq0} a_n z^n$ is equivalent to
	\[\sum_{n=0}^\infty a_n z^n = \lambda_1 \sum_{n=0}^\infty a_n \omega_k^n z^n.\]
	Using $W_k$ on this identity we get
	\[\sum_{n=0}^\infty a_{kn} z^{kn} = \lambda_1 \sum_{n=0}^\infty a_{kn} z^{kn}.\]
	This is only possible if $\lambda_1=1$ or $W_k f \equiv 0$. The latter implies that $f \equiv 0$ since $\|W_k f \|_{H^p}=\|f\|_{H^p}$ by assumption. Therefore we can restrict our attention to the case $\lambda_1=1$. For all integers $n$ that are not a multiple of $k$, we now find that
	\[a_n = \lambda_1 \omega_k^n a_n \qquad \implies \qquad  a_n =0,\]
	since $\lambda_1=1$ and $\omega_k^n \neq 1$. Hence $W_k f = f$ as desired.
\end{proof}

Recall that a function $f \in H^p$ is called inner if $|f(e^{i\theta})|=1$ for almost every $\theta$. We shall require the following simple result later on.

\begin{lemma} \label{lem:inner}
	If both $f$ and $W_k f$ are inner functions, then $f = W_k f$.
\end{lemma}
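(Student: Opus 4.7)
The plan is to exploit the equality case in the triangle inequality for unimodular complex numbers, applied pointwise to the boundary values. Since $f$ is inner, every rotation $f_j(z) := f(\omega_k^j z)$ also satisfies $|f_j(e^{i\theta})| = 1$ for a.e.\ $\theta$. The hypothesis that $W_k f$ is inner then translates, via the defining identity \eqref{eq:fwienerdef} on the boundary, into
\[\left|\sum_{j=0}^{k-1} f_j(e^{i\theta})\right| = k \qquad \text{for a.e.\ } \theta.\]

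First I would record the elementary fact that if $z_0, \ldots, z_{k-1}$ are complex numbers of modulus one with $|z_0 + \cdots + z_{k-1}| = k$, then $z_0 = z_1 = \cdots = z_{k-1}$; this is the standard equality case of the triangle inequality in $\mathbb{C}$, since equality forces the summands to be non-negative real multiples of a common unit vector, which, given that each already has modulus $1$, means they must coincide.

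Applying this pointwise a.e.\ to the boundary values $f_0(e^{i\theta}), \ldots, f_{k-1}(e^{i\theta})$ yields $f(\omega_k^j e^{i\theta}) = f(e^{i\theta})$ for a.e.\ $\theta$ and every $0 \leq j \leq k-1$. Since two $H^p$ functions that agree a.e.\ on the unit circle coincide, we conclude $f_j = f$ as elements of $H^p$. Substituting back into \eqref{eq:fwienerdef} gives
\[W_k f = \frac{1}{k}\sum_{j=0}^{k-1} f_j = f,\]
which is the desired conclusion. No step looks delicate here; the only point requiring minor care is the passage from a.e.\ equality of boundary values to equality of analytic functions, which is the same uniqueness principle already invoked at the end of the proof of Lemma~\ref{lem:Delta01}.
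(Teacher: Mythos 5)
Your proof is correct and follows essentially the same route as the paper: both arguments reduce the hypothesis to equality in the triangle inequality for the unimodular boundary values $f_j(e^{i\theta})$, conclude $f_0=f_1=\cdots=f_{k-1}$ almost everywhere, and then recover $f=W_kf$. Your final step (substituting $f_j=f$ directly into \eqref{eq:fwienerdef}) is in fact slightly more direct than the paper's reference back to the coefficient argument in Theorem~\ref{thm:fwiener}~(b), but the substance is the same.
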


\begin{proof}
    Since $|W_k f(e^{i\theta})|=|f(e^{i\theta})|=1$ for almost every $\theta$, we get from \eqref{eq:fwienerdef} that
   	\begin{equation} \label{eq:WkB}
   		1 = |W_kf(e^{i\theta})| = \Bigg|\frac{1}{k}\sum_{j=0}^{k-1} f_j(e^{i\theta})\Bigg| = \frac{1}{k}\sum_{j=0}^{k-1} |f_j(e^{i\theta})|,
   	\end{equation}
   	where $f_j(z)=f(\omega_k^j z)$. The equality on the right hand side of \eqref{eq:WkB} is possible if and only if
   	\[f(e^{i\theta})=f_1(e^{i\theta})=\cdots=f_{k-1}(e^{i\theta})\]
   	for almost every $\theta$. As in the proof of Theorem~\ref{thm:fwiener} (b), we find that $f = W_k f$.
\end{proof}

\section{The extremal problem $\Phi_1(p,t)$ for $0 < p \leq \infty$} \label{sec:Phi0inf}
In the present section, we resolve the extremal problem \eqref{eq:Phi} in the case $k=1$ completely. We begin with the case $1 \leq p \leq \infty$ which has been solved by Beneteau and Korenblum \cite{BK04}. We give a different proof of their result based on Lemma~\ref{lem:structure}, mainly to illustrate the differences between the cases $0<p<1$ and $1 \leq p \leq \infty$.

\begin{theorem}[Beneteau--Korenblum] \label{thm:BK1}
	Fix $1 \leq p \leq \infty$ and consider \eqref{eq:Phi} with $k=1$. 
	\begin{enumerate}
		\item[(i)] If $0 \leq t < 2^{-1/p}$, let $\alpha$ denote the unique real number in the interval $0 \leq \alpha < 1$ such that $t = \alpha(1+\alpha^2)^{-1/p}$. Then
			\begin{align*}
				\Phi_1(p,t) &= \frac{1}{\left(1+\alpha^2\right)^{1/p}}\left(1+ \left(\frac{2}{p}-1\right)\alpha^2\right),\phantom{\frac{2\beta}{p}}
				\intertext{and the unique extremal is}
				f(z) &= \frac{\alpha+z}{1+\alpha z} \frac{\left(1+\alpha z\right)^{2/p}}{\left(1+\alpha^2\right)^{1/p}}.
			\end{align*}
		\item[(ii)] If $2^{-1/p} \leq t \leq 1$, let $\beta$ denote the unique real number in the interval $0 \leq \beta \leq 1$ such that $t = (1+\beta^2)^{-1/p}$. Then
			\begin{align*}
				\Phi_1(p,t) &= \frac{1}{\left(1+\beta^2\right)^{1/p}} \frac{2\beta}{p},\phantom{\left(1+ \left(\frac{2}{p}-1\right)\alpha^2\right)}
				\intertext{and the unique extremal is}
				f(z) &= \frac{\left(1+\beta z\right)^{2/p}}{\left(1+\beta^2\right)^{1/p}}.
			\end{align*}
	\end{enumerate}
\end{theorem}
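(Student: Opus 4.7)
The plan is to apply Lemma~\ref{lem:structure} with $k=1$: every extremal $f$ for $\Phi_1(p,t)$ must take one of the two forms
\[f(z) = C\,(1-\overline{\lambda_1}z)^{2/p}\qquad\text{or}\qquad f(z) = C\,\frac{\lambda_1-z}{1-\overline{\lambda_1}z}\,(1-\overline{\lambda_1}z)^{2/p},\]
corresponding to $l=0$ (with $|\lambda_1|\leq 1$) and $l=1$ (with $|\lambda_1|<1$). In each case I would use $\|f\|_{H^p}=1$ (Lemma~\ref{lem:norm1}) and $f(0)=t$ to cut down to a single real parameter, and then maximize $\mre f'(0)$ over the remaining phase.

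For $l=0$, a direct integration using $|1-\overline{\lambda_1}e^{i\theta}|^2=1-2\mre(\overline{\lambda_1}e^{i\theta})+|\lambda_1|^2$ gives $\|f\|_{H^p}^p=|C|^p(1+\beta^2)$, where $\beta=|\lambda_1|$. Combined with $f(0)=C=t\geq 0$, this forces $C$ real positive and $t=(1+\beta^2)^{-1/p}$, which parametrizes exactly $t\in[2^{-1/p},1]$ via $\beta\in[0,1]$. Since $f'(0)=-(2/p)\,C\,\overline{\lambda_1}$, the real part is maximized by $\lambda_1=-\beta$, yielding the extremal and formula in part (ii).

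For $l=1$, the identity $|\lambda_1-e^{i\theta}|=|1-\overline{\lambda_1}e^{i\theta}|$ on the unit circle gives $\|f\|_{H^p}^p=|C|^p(1+\alpha^2)$ with $\alpha=|\lambda_1|$, so $|C|=(1+\alpha^2)^{-1/p}$. Writing $f(0)=C\lambda_1=t\geq 0$ yields $t=\alpha(1+\alpha^2)^{-1/p}$, and an elementary computation of the derivative shows that for $p\geq 1$ this is a strictly increasing bijection from $[0,1)$ onto $[0,2^{-1/p})$. Differentiating $f(z)=C(\lambda_1-z)(1-\overline{\lambda_1}z)^{2/p-1}$ gives $f'(0)=-C\bigl(1+(2/p-1)\alpha^2\bigr)$, and $\mre f'(0)$ is maximized by taking $\lambda_1=-\alpha$ with $C$ real negative, producing part (i).

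The main delicate points are the careful phase optimization, which is also what secures uniqueness (only one choice of $\lambda_1$ and $C$ achieves the maximum once $|\lambda_1|$ is determined), together with the monotonicity of $\alpha\mapsto \alpha(1+\alpha^2)^{-1/p}$ on $[0,1]$, which is where the hypothesis $p\geq 1$ is used essentially. Since the $l=1$ branch covers exactly $t\in[0,2^{-1/p})$ and the $l=0$ branch covers exactly $t\in[2^{-1/p},1]$, with the two parametrizations agreeing at the common endpoint $t=2^{-1/p}$, the two regimes partition $[0,1]$ and give the complete extremal description. The breakdown of this monotonicity for $0<p<1$ is precisely what produces the additional phenomena developed in the subsequent sections.
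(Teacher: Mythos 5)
Your proposal is correct and follows essentially the same route as the paper: invoke Lemma~\ref{lem:structure} with $k=1$ to reduce to the two candidate forms, use $\|f\|_{H^p}=1$ and $f(0)=t$ together with the monotonicity of $\alpha\mapsto\alpha(1+\alpha^2)^{-1/p}$ and $\beta\mapsto(1+\beta^2)^{-1/p}$ for $p\geq1$ to see that the two branches cover disjoint $t$-ranges with uniquely determined parameters, and then compute $f'(0)$; your explicit phase optimization is just the detailed version of the paper's normalization ``by rotations.'' The only cosmetic caveat is that the integral norm computations should be replaced by the trivial supremum argument when $p=\infty$, which does not affect the argument.
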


\begin{proof}
	Note that since $k=1$, there are only two possibilities for the extremals in Lemma~\ref{lem:structure}. They are
	\begin{align}
		f_1(z) &= \frac{\alpha+z}{1+\alpha z} \frac{\left(1+\alpha z\right)^{2/p}}{\left(1+\alpha^2\right)^{1/p}}, & & 0\leq \alpha < 1, \label{eq:f1} \\
		f_2(z) &= \frac{\left(1+\beta z\right)^{2/p}}{\left(1+\beta^2\right)^{1/p}}, && 0 \leq \beta \leq 1. \label{eq:f2}
	\end{align}
	Here we have made $\alpha,\beta\geq0$ by rotations. Note that if $p=\infty$, then $f_2$ does not depend on $\beta$. Moreover,
	\begin{align}
		t&= f_1(0) = \frac{\alpha}{(1+\alpha^2)^{1/p}}, \label{eq:talpha} \\
		t&= f_2(0) = \frac{1}{(1+\beta^2)^{1/p}}. \label{eq:tbeta}
	\end{align}
	For $1 \leq p \leq \infty$ it is easy to verify that the function
	\begin{equation} \label{eq:alphafunc}
		\alpha \mapsto\frac{\alpha}{(1+\alpha^2)^{1/p}}
	\end{equation}
	is strictly increasing on $0 \leq \alpha <1$ and maps $[0,1)$ to $[0,2^{-1/p})$. Similarly, for $1 \leq p < \infty$ we find that the function 
	\begin{equation} \label{eq:betafunc}
		\beta \mapsto \frac{1}{(1+\beta^2)^{1/p}}
	\end{equation}
	is strictly decreasing on $0 \leq \beta \leq 1$ and maps $[0,1]$ to $[2^{-1/p},1]$. Consequently, if $0 \leq t < 2^{-1/p}$, then the unique extremal is \eqref{eq:f1} with $\alpha$ given by \eqref{eq:talpha}, and if $2^{-1/p} \leq t \leq 1$, then the unique extremal is \eqref{eq:f2} with $\beta$ given by \eqref{eq:tbeta}. The proof is completed by computing 
	\begin{align}
		f_1'(0) &= \frac{1}{\left(1+\alpha^2\right)^{1/p}}\left(1+\alpha^2\left(\frac{2}{p}-1\right)\right) = t \left(\frac{1}{\alpha}+\alpha\left(\frac{2}{p}-1\right)\right), \label{eq:f10} \\
		f_2'(0) &= \frac{1}{(1+\beta^2)^{1/p}} \frac{2\beta}{p} = t \frac{2\beta}{p},\label{eq:f20}
	\end{align}
	to obtain the stated expressions for $\Phi_1(p,t)$ in (i) and (ii), respectively.
\end{proof}

Define $\alpha$ and $\beta$ as functions of $t$ implicitly through \eqref{eq:talpha} and \eqref{eq:tbeta}. Then $\alpha$ is increasing on $0 \leq t < 2^{-1/p}$ and $\beta$ is decreasing on $2^{-1/p}\leq t \leq 1$. Inspecting the left hand side of \eqref{eq:f10} and \eqref{eq:f20}, we extract the following result.
\begin{corollary} \label{cor:dec}
	If $1 \leq p \leq \infty$, then the function $t \mapsto \Phi_1(p,t)$ is decreasing and takes the values $[0,1]$.
\end{corollary}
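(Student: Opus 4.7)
The plan is to extract both the decreasing property and the range from the explicit formulas in Theorem~\ref{thm:BK1}, working in the parametrisations by $\alpha$ and $\beta$ supplied by \eqref{eq:talpha} and \eqref{eq:tbeta}.

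First I would record the endpoints and continuity. Substituting $\alpha=0$ in (i) gives $\Phi_1(p,0)=1$, and $\beta=0$ in (ii) gives $\Phi_1(p,1)=0$. At the transition point $t=2^{-1/p}$ one has $\alpha=1$ in (i) and $\beta=1$ in (ii), and both formulas collapse to the common value $(2/p)\,2^{-1/p}$, so the two pieces glue continuously. Combined with the fact that \eqref{eq:alphafunc} and \eqref{eq:betafunc} make $\alpha$ and $\beta$ continuous and strictly monotonic in $t$, this yields continuity of $t\mapsto \Phi_1(p,t)$ on $[0,1]$.

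For monotonicity on each piece I would differentiate in the parameters. Writing $u=\alpha^2$ and differentiating the formula for $\Phi_1(p,t)$ in Theorem~\ref{thm:BK1}(i) gives
\[\frac{d\Phi_1}{du}=-\frac{p-1}{p^2}\cdot\frac{p+(p-2)u}{(1+u)^{1/p+1}},\]
which is non-positive for every $p\geq 1$ since the factor $p+(p-2)u$ attains its minimum $2(p-1)$ at $u=1$. Since $\alpha$ is an increasing function of $t$ by \eqref{eq:alphafunc}, $\Phi_1$ is decreasing in $t$ on $[0,2^{-1/p})$. Similarly, differentiating the formula $\Phi_1=(2/p)\beta/(1+\beta^2)^{1/p}$ of Theorem~\ref{thm:BK1}(ii) yields
\[\frac{d\Phi_1}{d\beta}=\frac{2}{p}\cdot\frac{1+(1-2/p)\beta^2}{(1+\beta^2)^{1/p+1}},\]
which is non-negative for every $p\geq 1$, the worst case being $2(p-1)/p$ at $\beta=1$. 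Since $\beta$ is a decreasing function of $t$ by \eqref{eq:betafunc}, $\Phi_1$ is again decreasing in $t$ on $[2^{-1/p},1]$.

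Assembling continuity, the two boundary values, and the piecewise monotonicity, $t\mapsto\Phi_1(p,t)$ is a continuous decreasing function from $1$ to $0$ on $[0,1]$, so its range is exactly $[0,1]$ by the intermediate value theorem. There is no deep obstacle; the only mildly delicate point is tracking the sign of $p+(p-2)u$ and $1+(1-2/p)\beta^2$ when $1\leq p<2$, where the coefficient of $u$ or $\beta^2$ is negative and one must verify that the worst case at the endpoint still yields the correct sign — both reduce to $2(p-1)/p\geq 0$.
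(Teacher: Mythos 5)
Your proof is correct and takes essentially the same route as the paper, which also reads the corollary off the parametrizations \eqref{eq:talpha}--\eqref{eq:tbeta} and the expressions \eqref{eq:f10}--\eqref{eq:f20}, merely leaving to inspection the monotonicity in $\alpha$ and $\beta$ that you verify by differentiation. The only cosmetic slip is that the ``worst case'' of $p+(p-2)u$ at $u=1$ (resp.\ of $1+(1-2/p)\beta^2$ at $\beta=1$) is the minimum only when $1\leq p<2$, the minimum lying at the other endpoint for $p\geq 2$; since the factors are nonnegative in all cases (and $p=\infty$ is the degenerate case $\Phi_1(\infty,t)=1-t^2$), your sign conclusions and hence the corollary stand.
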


In the range $0<p<1$ a more careful analysis is required. This is due to the fact that the function \eqref{eq:alphafunc} is increasing on the interval $0 \leq \alpha \leq \alpha_2$ and decreasing on the interval $\alpha_2 \leq \alpha < 1$, where
\begin{equation} \label{eq:alpha2}
	\alpha_2 = \sqrt{\frac{p}{2-p}}.
\end{equation}
Inspecting \eqref{eq:talpha}, we conclude that for each $2^{-1/p}<t<2^{-1/p}\sqrt{p}(2-p)^{1/p-1/2}$ there are two possible $\alpha$-values which give the same $t=f_1(0)$. Let $\alpha_1$ denote the unique real number in the interval $(0,1)$ such that
\begin{equation} \label{eq:alpha1}
	1+\alpha_1^2= 2\alpha_1^p.
\end{equation}
Note that $\alpha_1$ gives the value $t=2^{-1/p}$ in \eqref{eq:talpha}. 

\begin{lemma} \label{lem:smallisbig}
	If $\alpha_1 < \alpha< \alpha_2$ and $\alpha_2<\widetilde{\alpha}<1$ produce the same $t=f_1(0)$ in \eqref{eq:talpha}, then the quantity $f'_1(0)$ from \eqref{eq:f10} is maximized by $\alpha$.
\end{lemma}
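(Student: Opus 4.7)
The plan is to recast the claim as a comparison of the function
\[F(\alpha) := \frac{1 + (2/p-1)\alpha^2}{(1+\alpha^2)^{1/p}},\]
which equals $f_1'(0)$ by \eqref{eq:f10}, at two values $\alpha<\alpha_2<\widetilde{\alpha}$ producing the same $t$ in \eqref{eq:talpha}. A direct differentiation yields
\[F'(\alpha) = \frac{2\alpha(1-p)}{p\,(1+\alpha^2)^{1/p+1}}\left(1 - \frac{(2-p)\alpha^2}{p}\right), \qquad t'(\alpha) = \frac{1 - (2-p)\alpha^2/p}{(1+\alpha^2)^{1/p+1}}.\]

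The crucial observation, and indeed the key step, is that $F'(\alpha)$ and $t'(\alpha)$ share the common factor $[1-(2-p)\alpha^2/p]$, which vanishes precisely at $\alpha=\alpha_2$. Away from this critical point, the chain rule therefore produces the remarkably simple identity
\[\frac{dF}{dt} = \frac{F'(\alpha)}{t'(\alpha)} = \frac{2(1-p)\alpha}{p}.\]

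With this in hand, I would view $\alpha$ and $\widetilde{\alpha}$ as smooth functions of $t$ on the overlap interval $(2^{-1/p}, t_{\max})$, where $t_{\max}=t(\alpha_2)$, and set $G(t) = F(\widetilde{\alpha}(t)) - F(\alpha(t))$. The identity above gives
\[G'(t) = \frac{2(1-p)}{p}\bigl(\widetilde{\alpha}(t) - \alpha(t)\bigr) > 0,\]
since $\widetilde{\alpha}>\alpha_2>\alpha$ and $0<p<1$. As $t\to t_{\max}^-$ the two branches merge at $\alpha_2$, so $G(t_{\max})=0$, and consequently $G(t)<0$ throughout $(2^{-1/p}, t_{\max})$. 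This yields $F(\alpha)>F(\widetilde{\alpha})$, which is the desired conclusion. The only substantive obstacle is spotting and exploiting the common factor in $F'$ and $t'$; without that cancellation, one would be forced into a direct comparison of transcendental quantities constrained by the implicit relation \eqref{eq:talpha}.
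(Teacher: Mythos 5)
Your proposal is correct, and it takes a genuinely different route from the paper. You verify the two derivative formulas correctly: with $c=(2-p)/p$ one indeed gets $F'(\alpha)=\frac{2\alpha(1-p)}{p(1+\alpha^2)^{1/p+1}}\bigl(1-\frac{(2-p)\alpha^2}{p}\bigr)$ and $t'(\alpha)=\frac{1-(2-p)\alpha^2/p}{(1+\alpha^2)^{1/p+1}}$, so the common factor cancels and $dF/dt=2(1-p)\alpha/p$ along each branch of the level set of \eqref{eq:talpha}; since $t'$ is nonzero on $(\alpha_1,\alpha_2)$ and on $(\alpha_2,1)$, the two branches $\alpha(t)<\alpha_2<\widetilde{\alpha}(t)$ are smooth in $t$, the difference $G(t)=F(\widetilde{\alpha}(t))-F(\alpha(t))$ is strictly increasing (here $0<p<1$ is used), and the branches merge continuously at $\alpha_2$ as $t\to t(\alpha_2)^-$, forcing $G<0$ on the overlap, which is exactly the claim. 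The paper argues pointwise instead: after dividing by $t$ it reduces the claim to $\frac{1}{\alpha}+\frac{\alpha}{\alpha_2^2}>\frac{1}{\widetilde{\alpha}}+\frac{\widetilde{\alpha}}{\alpha_2^2}$, introduces the reflected point $\xi=\alpha_2^2/\alpha$ at which equality holds, reduces to $\xi>\widetilde{\alpha}$ via the monotonicity of \eqref{eq:talpha} on $(\alpha_2,1)$, and after the substitution $\alpha=\alpha_2\sqrt{x}$ proves the resulting one-variable inequality by Taylor's theorem at $x=1$. Your approach buys brevity and a structurally illuminating identity (an envelope-type cancellation that also explains the behaviour of $\Phi_1(p,t)$ across the fold at $\alpha_2$), at the modest cost of invoking the inverse/implicit function theorem and a limiting argument at the merge point; the paper's approach is an explicit elementary comparison that never parametrizes by $t$, at the cost of the auxiliary reflection $\xi=\alpha_2^2/\alpha$ and a second-derivative estimate.
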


\begin{proof}
	Since $\alpha$ and $\widetilde{\alpha}$ give the same $t=f_1(0)$ in \eqref{eq:f10}, we only need to prove that 
	\begin{equation} \label{eq:alphatilde}
		\frac{1}{\alpha}+\frac{\alpha}{\alpha_2^2}>\frac{1}{\widetilde{\alpha}}+\frac{\widetilde{\alpha}}{\alpha_2^2}.
	\end{equation}
	Fix $\alpha_1 < \alpha <\alpha_2$. The unique number $\alpha_2<\xi<1$ such that 
	\[\frac{1}{\alpha}+\frac{\alpha}{\alpha_2^2} = \frac{1}{\xi}+\frac{\xi}{\alpha_2^2}\]
	is $\xi=\alpha_2^2/\alpha$. Since the function
	\[x \mapsto \frac{1}{x}+\frac{x}{\alpha_2^2}\]
	is increasing for $x>\alpha_2$ it is sufficient to prove that $\xi>\widetilde{\alpha}$ to obtain \eqref{eq:alphatilde}. Since
	\[x \mapsto \frac{x}{(1+x^2)^{1/p}}\]
	is decreasing for $x>\alpha_2$, we see that $\xi>\widetilde{\alpha}$ if and only if
	\[\frac{\widetilde{\alpha}}{(1+\widetilde{\alpha}^2)^{1/p}}>\frac{\xi}{(1+\xi^2)^{1/p}} \qquad \Longleftrightarrow \qquad \frac{\alpha}{(1+\alpha^2)^{1/p}}> \frac{\alpha_2^2/\alpha}{(1+(\alpha_2^2/\alpha)^2)^{1/p}}.\]
	Here we used that $\alpha$ and $\widetilde{\alpha}$ give the same $t=f_1(0)$ in \eqref{eq:talpha} on the left hand side and the identity $\xi = \alpha_2^2/\alpha$ on the right hand side. We now substitute $\alpha = \alpha_2 \sqrt{x}$ for $0<x<1$ to obtain the equivalent inequality 
	\begin{equation} \label{eq:xalpha}
		\frac{x}{(1+\alpha_2^2 x)^{1/p}}>\frac{1}{(1+\alpha_2^2/x)^{1/p}}.
	\end{equation}
	Actually, we only need to consider $(\alpha_1/\alpha_2)^2<x<1$, but the same proof works for $0<x<1$. We raise both sides of \eqref{eq:xalpha} to the power $p$, multiply by $x^{1-p}$ and rearrange to get the equivalent inequality $F(x)>0$ where
	\[F(x) = \left(x-x^{1-p}\right)+\alpha_2^2 \left(1-x^{2-p}\right).\]
	Recalling that $\alpha_2^2 = p/(2-p)$, we compute
	\[F'(x) = \left(1-(1-p)x^{-p}\right) - px^{1-p} \quad \text{and}\quad F''(x) = p(1-p)x^{-p-1}-p(1-p)x^{-p}.\]
	Since $F(1)=F'(1)=0$, we get from Taylor's theorem that for every $0<x<1$ there is some $x<\eta<1$ such that
	\[F(x) = \frac{F''(\eta)}{2}(x-1)^2 = \frac{p(1-p)}{2}\eta^{-p}\left(\eta^{-1}-1\right)(x-1)^2 >0,\]
	which completes the proof.
\end{proof}

By Lemma~\ref{lem:smallisbig}, we now only need to compare $f_1'(0)$ from \eqref{eq:f10} for $\alpha_1 \leq \alpha \leq \alpha_2$ with $f_2'(0)$ from \eqref{eq:f20} for $\beta$ such that $f_1(0)=t=f_2(0)$. Inspecting \eqref{eq:talpha} and \eqref{eq:tbeta}, we find that  
\begin{equation} \label{eq:beta}
	\frac{\alpha}{\left(1+\alpha^2\right)^{1/p}} = \frac{1}{\left(1+\beta^2\right)^{1/p}} \qquad \Longleftrightarrow \qquad \beta = \sqrt{\frac{1+\alpha^2}{\alpha^p}-1}.
\end{equation}
Next, we consider the equation $f_1'(0)=f_2'(0)$ with $\beta$ as in \eqref{eq:beta}. Inspecting \eqref{eq:f10} and \eqref{eq:f20} and dividing by $t$, we get the equation
\begin{equation} \label{eq:alpha}
	\frac{1}{\alpha}+\alpha\left(\frac{2}{p}-1\right) = \frac{2\beta}{p} = \frac{2}{p}\sqrt{\frac{1+\alpha^2}{\alpha^p}-1}.
\end{equation}
We square both sides, multiply by $p^2$ and rearrange to find that \eqref{eq:alpha} is equivalent to the equation $F_p(\alpha)=0$, where
\begin{equation} \label{eq:Fpa}
	F_p(\alpha) = p^2 \alpha^{-2}+2p(2-p) + (2-p)^2 \alpha^2 - 4\left(\alpha^{-p}+\alpha^{2-p}-1\right).
\end{equation}
Suppose that $\alpha_1 \leq \alpha \leq \alpha_2$. If
\begin{itemize}
	\item $F_p(\alpha)>0$, then $f_1$ from \eqref{eq:f1} is the unique extremal for $\Phi_1(p,t)$.
	\item $F_p(\alpha)=0$, then $f_1$ from \eqref{eq:f1} and $f_2$ from \eqref{eq:f2} are extremals for $\Phi_1(p,t)$.
	\item $F_p(\alpha)<0$, then $f_2$ from \eqref{eq:f2} is the unique extremal for $\Phi_1(p,t)$.
\end{itemize}
Note that any solutions of $F_p(\alpha)=0$ with $0<\alpha<\alpha_1$ are of no interest since this implies that $\beta>1$ by \eqref{eq:beta}. Similarly, any solutions of $F_p(\alpha)=0$ with $\alpha_2<\alpha<1$ can be ignored due to Lemma~\ref{lem:smallisbig}. The following result shows that there is only one solution, which is in the pertinent range.

\begin{lemma} \label{lem:equation}
	Let $F_p$ be as in \eqref{eq:Fpa}. The equation $F_p(\alpha)=0$ has a unique solution, denoted $\alpha_p$, on the interval $(0,1)$. Moreover,
	\begin{enumerate}
		\item[(a)] if $0<\alpha<\alpha_p$, then $F_p(\alpha)>0$.
		\item[(b)] if $\alpha_p<\alpha<1$, then $F_p(\alpha)<0$.
		\item[(c)] $\alpha_1<\alpha_p<\alpha_2$ where $\alpha_1$ and $\alpha_2$ are from \eqref{eq:alpha1} and \eqref{eq:alpha2}, respectively.
	\end{enumerate}
\end{lemma}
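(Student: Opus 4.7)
My plan is to establish a U-shape for $F_p$ on $(0,1)$ with minimum at $\alpha_2$, and then reduce the placement of $\alpha_1$ to a scalar inequality. First I would verify by direct differentiation and regrouping the factorization
\[F_p'(\alpha) = 2\alpha^{-3} P(\alpha) Q(\alpha), \qquad P(\alpha) = (2-p)\alpha^2 - p, \qquad Q(\alpha) = (2-p)\alpha^2 + p - 2\alpha^{2-p}.\]
The factor $P$ changes sign precisely at $\alpha_2 = \sqrt{p/(2-p)}$. For $Q$ I would observe $Q(0) = p > 0$, $Q(1) = 0$, and $Q'(\alpha) = 2(2-p)\alpha(1 - \alpha^{-p}) < 0$ on $(0,1)$, giving $Q > 0$ on $(0,1)$. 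Consequently $F_p$ is strictly decreasing on $(0, \alpha_2)$ and strictly increasing on $(\alpha_2, 1)$.

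Next I would examine the boundary behavior. Clearly $F_p(\alpha) \to +\infty$ as $\alpha \to 0^+$, while $F_p(1) = 0$. A straightforward computation yields $F_p'(1) = F_p''(1) = 0$ and $F_p'''(1) = 8p(p-1)(p-2)$, which is strictly positive for $0 < p < 1$. Taylor's theorem then forces $F_p < 0$ on a left-neighborhood of $1$, and strict monotonicity on $(\alpha_2, 1)$ propagates this to $F_p(\alpha_2) < 0$ and $F_p < 0$ on all of $(\alpha_2, 1)$. The intermediate value theorem combined with strict monotonicity on $(0, \alpha_2)$ then produces the unique zero $\alpha_p \in (0, \alpha_2)$ with the sign behavior of (a) and (b), and the inequality $\alpha_p < \alpha_2$ in (c) is automatic. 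For the remaining bound $\alpha_1 < \alpha_p$, monotonicity on $(0, \alpha_2)$ reduces the task to showing $F_p(\alpha_1) > 0$. Writing $F_p = L^2 - R^2$ with $L(\alpha) = p\alpha^{-1} + (2-p)\alpha$ and $R(\alpha)^2 = 4[\alpha^{-p}(1+\alpha^2) - 1]$, the defining identity $1 + \alpha_1^2 = 2\alpha_1^p$ forces $R(\alpha_1) = 2$, while $L(\alpha) - 2 = (2-p)\alpha^{-1}(\alpha - 1)(\alpha - \alpha_2^2)$. Hence the claim reduces to $\alpha_1 < \alpha_2^2 = p/(2-p)$, or equivalently $1 + r^2 < 2 r^p$ with $r = p/(2-p)$.

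I expect this last inequality to be the main obstacle. My plan is to substitute $r = e^{-s}$ with $s > 0$, so that $p = 2/(e^s + 1)$, and show that the target rearranges to
\[s \tanh(s/2) > \log \cosh s \qquad \text{for all } s > 0.\]
Since $\log \cosh s = \int_0^s \tanh(t)\,dt$ and $\tanh$ is strictly concave on $[0, \infty)$, this is precisely the strict midpoint form of the Hermite--Hadamard inequality and follows immediately, completing the argument.
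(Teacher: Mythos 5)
Your argument is correct, and it takes a genuinely different (and in places shorter) route than the paper's Appendix~\ref{app:equation}. I checked the factorization $F_p'(\alpha)=2\alpha^{-3}P(\alpha)Q(\alpha)$: expanding gives exactly $-2p^2\alpha^{-3}+2(2-p)^2\alpha+4p\alpha^{-p-1}-4(2-p)\alpha^{1-p}$, and your sign analysis of $P$ and $Q$ is right, so $F_p$ is strictly decreasing on $(0,\alpha_2)$ and strictly increasing on $(\alpha_2,1)$. This is sharper than Lemma~\ref{lem:prelim1}, which only counts critical points via Rolle, and—more importantly—together with $F_p(1)=0$ it yields $F_p(\alpha_2)<0$ for free, so you bypass Lemma~\ref{lem:prelim2} entirely (a separate and fairly delicate analysis of $H(p)=2-(1+2p-p^2)p^{p/2}(2-p)^{(2-p)/2}$). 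Your Taylor computation at $\alpha=1$ is correct ($F_p'''(1)=8p(p-1)(p-2)>0$) but redundant: strict monotonicity on $(\alpha_2,1)$ plus continuity and $F_p(1)=0$ already give negativity there. For $F_p(\alpha_1)>0$ you arrive, as the paper does in Lemma~\ref{lem:prelim3}, at the reduction to $\alpha_1<p/(2-p)$; do add the one-line remark that $1-2\alpha^p+\alpha^2$ is positive on $(0,\alpha_1)$ (it equals $1$ at $\alpha=0$ and $\alpha_1$ is its only zero in $(0,1)$), so that $1+r^2<2r^p$ with $r=p/(2-p)$ indeed forces $r>\alpha_1$. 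Your proof of that scalar inequality is the nicest departure: with $r=e^{-s}$ one gets $p=2/(e^s+1)$ and $1-p=\tanh(s/2)$, the inequality becomes $s\tanh(s/2)>\log\cosh s=\int_0^s\tanh(t)\,dt$, and this is the strict midpoint Hermite--Hadamard inequality for the strictly concave $\tanh$; the paper instead proves the equivalent statement $K(p)=2-2p+p^2-p^p(2-p)^{2-p}<0$ by a longer Taylor argument requiring $K''<0$, which in turn needs two auxiliary monotonicity checks. In short, the paper's route is elementary calculus throughout but needs two nontrivial auxiliary computations, while yours trades these for one algebraic factorization and one classical convexity inequality, and as a bonus exhibits the exact U-shape of $F_p$ on $(0,1)$.
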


The proof of Lemma~\ref{lem:equation} is a rather laborious calculus exercise, which we postpone to Appendix~\ref{app:equation} below. Let $\alpha_p$ be as in Lemma~\ref{lem:equation} and define
\begin{equation} \label{eq:tp}
	t_p = \frac{\alpha_p}{\left(1+\alpha_p^2\right)^{1/p}}.
\end{equation}
Note that $2^{-1/p}< t_p < 2^{-1/p} \sqrt{p}(2-p)^{1/p-1/2}$ by the fact that $\alpha_1 < \alpha_p < \alpha_2$. By the analysis above, Lemma~\ref{lem:smallisbig} and Lemma~\ref{lem:equation}, we obtain the following version of Theorem~\ref{thm:BK1} in the range $0<p<1$.

\begin{theorem} \label{thm:Phi01}
	Fix $0<p<1$ and consider \eqref{eq:Phi} with $k=1$. Let $t_p$ be as in \eqref{eq:tp} and set $\alpha_2 = \sqrt{p/(2-p)}$.
	\begin{enumerate}
		\item[(i)] If $0 \leq t \leq t_p$, let $\alpha$ denote the unique real number in the interval $0 \leq \alpha < \alpha_2$ such that $t = \alpha(1+\alpha^2)^{-1/p}$. Then
			\begin{align*}
				\Phi_1(p,t) &= \frac{1}{\left(1+\alpha^2\right)^{1/p}}\left(1+ \left(\frac{2}{p}-1\right)\alpha^2\right),\phantom{\frac{2\beta}{p}}
				\intertext{and an extremal is}
				f(z) &= \frac{\alpha+z}{1+\alpha z} \frac{\left(1+\alpha z\right)^{2/p}}{\left(1+\alpha^2\right)^{1/p}}.
			\end{align*}
		\item[(ii)] If $t_p \leq t \leq 1$, let $\beta$ denote the unique real number in the interval $0 \leq \beta \leq 1$ such that $t = (1+\beta^2)^{-1/p}$. Then
			\begin{align*}
				\Phi_1(p,t) &= \frac{1}{\left(1+\beta^2\right)^{1/p}} \frac{2\beta}{p},\phantom{\left(1+ \left(\frac{2}{p}-1\right)\alpha^2\right)}
				\intertext{and an extremal is}
				f(z) &= \frac{\left(1+\beta z\right)^{2/p}}{\left(1+\beta^2\right)^{1/p}}.
			\end{align*}
	\end{enumerate}
	The extremals are unique for $0 \leq t \neq t_p \leq 1$. The only extremals for $\Phi_1(p,t_p)$ are the functions given in \textnormal{(i)} and \textnormal{(ii)}.
\end{theorem}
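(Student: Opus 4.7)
The plan is to invoke Lemma~\ref{lem:structure} with $k=1$, which forces every extremal to belong to the two one-parameter families exhibited in \eqref{eq:f1}--\eqref{eq:f2}. A rotation in $\mathbb{T}$ together with Lemma~\ref{lem:norm1} then reduces us to real parameters $\alpha\in[0,1)$ and $\beta\in[0,1]$ with the multiplicative constant fixed so that $\|f\|_{H^p}=1$. Since an extremal is guaranteed to exist by the normal families argument mentioned in the introduction, it suffices for each $t$ to determine which of the available candidates (those with $f_1(0)=t$ or $f_2(0)=t$) maximizes the real part of the derivative at $0$.

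I would next catalogue the candidates as functions of $t$. The map $\beta\mapsto(1+\beta^2)^{-1/p}$ is strictly decreasing from $1$ to $2^{-1/p}$, so $f_2$ contributes exactly one candidate for each $t\in[2^{-1/p},1]$ and none otherwise. The map $\alpha\mapsto\alpha(1+\alpha^2)^{-1/p}$ is strictly increasing on $[0,\alpha_2]$ from $0$ to $M:=2^{-1/p}\sqrt{p}(2-p)^{1/p-1/2}$ and strictly decreasing on $[\alpha_2,1)$ back to $2^{-1/p}$. Hence $f_1$ contributes one candidate for $t\in[0,2^{-1/p}]$ (with $\alpha\in[0,\alpha_1]$), two candidates for $t\in(2^{-1/p},M)$ (one $\alpha\in(\alpha_1,\alpha_2)$ and one $\widetilde{\alpha}\in(\alpha_2,1)$), one at $t=M$, and none for $t>M$. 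Lemma~\ref{lem:smallisbig} rules out the larger $\widetilde{\alpha}$, so I may restrict attention to $\alpha\in[0,\alpha_2]$.

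The decisive step is to compare $f_1'(0)$ from \eqref{eq:f10} against $f_2'(0)$ from \eqref{eq:f20}, where $\beta$ is linked to $\alpha$ by \eqref{eq:beta}. Both quantities are strictly positive when $0<p<1$ (because $2/p-1>1$), so the equation $f_1'(0)=f_2'(0)$ squares, after clearing denominators, to the polynomial equation $F_p(\alpha)=0$ from \eqref{eq:Fpa}, and the corresponding strict inequalities are preserved. By Lemma~\ref{lem:equation} the equation has a unique root $\alpha_p$ on $(0,1)$, which lies in $(\alpha_1,\alpha_2)$, with $F_p>0$ on $[\alpha_1,\alpha_p)$ and $F_p<0$ on $(\alpha_p,\alpha_2]$. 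Because $\alpha\mapsto\alpha(1+\alpha^2)^{-1/p}$ is a strictly increasing bijection from $[0,\alpha_2]$ onto $[0,M]$, the value $t_p$ defined in \eqref{eq:tp} is precisely the unique $t\in(2^{-1/p},M)$ at which the two derivatives coincide. Putting the pieces together: for $t\in[0,t_p)$ only $f_1$ attains the supremum (trivially for $t<2^{-1/p}$ since $f_2$ is not a candidate, and because $F_p>0$ on $[2^{-1/p},t_p)$); both $f_1$ and $f_2$ attain it at $t=t_p$; and for $t\in(t_p,1]$ only $f_2$ attains it (because $F_p<0$ on $(t_p,M]$ and $f_1$ is unavailable for $t>M$). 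Uniqueness off $t_p$ follows from the strict inequalities $F_p\neq 0$ on $[\alpha_1,\alpha_2]\setminus\{\alpha_p\}$ combined with Lemma~\ref{lem:smallisbig}.

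The main obstacle is encapsulated in Lemma~\ref{lem:equation}, whose verification is postponed to Appendix~\ref{app:equation}; modulo that analytic input, the argument above is essentially bookkeeping across the parameter ranges identified by Lemma~\ref{lem:structure}.
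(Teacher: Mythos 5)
Your proposal is correct and follows essentially the same route as the paper: Lemma~\ref{lem:structure} with $k=1$ reduces to the families \eqref{eq:f1}--\eqref{eq:f2}, Lemma~\ref{lem:smallisbig} discards the branch $\alpha>\alpha_2$, and the comparison of \eqref{eq:f10} with \eqref{eq:f20} via \eqref{eq:beta} reduces to the sign of $F_p$, settled by Lemma~\ref{lem:equation}. The only blemishes are cosmetic: the justification of positivity before squaring (for $f_2'(0)$ it is $\beta>0$, not $2/p-1>1$, that matters) and writing ``$F_p>0$ on $[2^{-1/p},t_p)$'' where the argument of $F_p$ should be the corresponding $\alpha$, not $t$.
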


\begin{figure}
	\centering
	\begin{tikzpicture}
		\begin{axis}
			[axis equal image,
			axis lines=middle,
			axis line style=thin,
			axis on top,
			xmin=0,
			xmax=1.1,
			xtick={0.25,0.5,0.75,1},
			xticklabels={\sfrac{1}{4},\sfrac{1}{2},\sfrac{3}{4},1},
			ymin=0,
			ymax=.55,
			ytick={.25,.5},
			yticklabels={\sfrac{1}{4},\sfrac{1}{2}},
			ylabel=$t$,
			xlabel=$p$,
			every axis x label/.style={ at={(ticklabel* cs:1.025)}, anchor=west,},
			every axis y label/.style={ at={(ticklabel* cs:1.025)}, anchor=south,},
			axis line style={->}, x post scale=1.5,
			axis line style={->}, y post scale=1.6855]
			\addplot[thin, name path=t1, densely dotted, color=blue, draw opacity=1, domain=0:1, samples=500] {2^(-1/x)};
			\addplot[thin, name path=t2, densely dotted, color=blue, draw opacity=1, domain=0:1, samples=500] {(1-x/2)^(1/x)*(x/(2-x))^(1/2)};
			\addplot[thin, blue, draw opacity=0, name path=l1, domain=0:1] {0};
			\addplot[thin, blue, draw opacity=0, name path=l2, domain=0:1] {0.5};
			
			\addplot [
			    thick,
			    color=blue,
			    fill=blue, 
			    fill opacity=0.05
			]
			fill between[
			    of=t1 and l1,
			    soft clip={domain=0:1},
			];
			
			\addplot [
			    thick,
			    color=blue,
			    fill=blue, 
			    fill opacity=0.05
			]
			fill between[
			    of=l2 and t2,
			    soft clip={domain=0:1},
			];
			\input{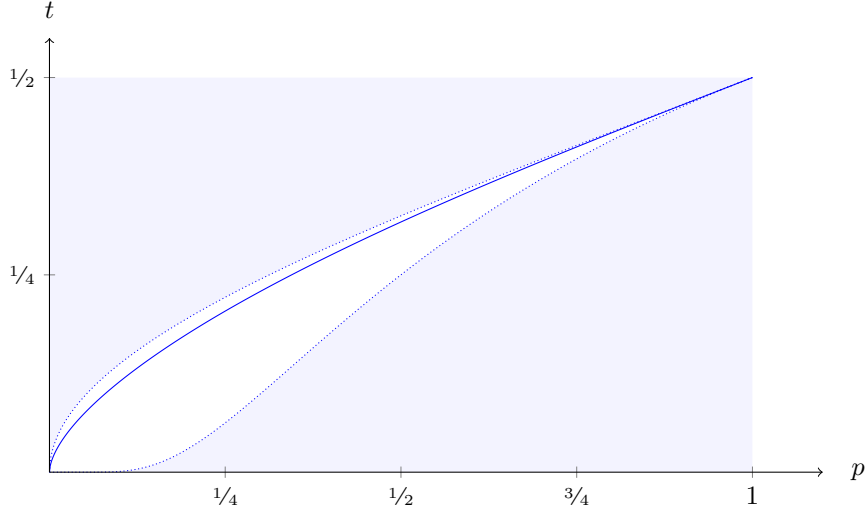}
		\end{axis}
	\end{tikzpicture}
	\caption{\label{fig:t_p} Plot of the curve $p \mapsto t_p$. Points $(p,t)$ above and below the curve correspond to the cases (i) and (ii) of Theorem~\ref{thm:Phi01}, respectively. The estimates $2^{-1/p}<t_p<2^{-1/p}\sqrt{p}(2-p)^{1/p-1/2}$ are represented by dotted curves. In the shaded area and in the range $1/2 \leq t \leq 1$, Theorem~\ref{thm:Phi01} is originally due to Connelly \cite{Con17}.}
\end{figure}

Theorem~\ref{thm:Phi01} extends \cite[Thm.~4.1]{Con17} to general $0 \leq t \leq1$. The analysis in \cite{Con17} is similar to ours, and we are able to also identify the extremals in the range
\[2^{-1/p}\leq t \leq 2^{-1/p}\sqrt{p}(2-p)^{1/p-1/2}\]
due to Lemma~\ref{lem:smallisbig} and Lemma~\ref{lem:equation}. It is also demonstrated in \cite[Thm.~4.1]{Con17} that when $p=1/2$ there must exist at least one value of $0<t<1$ for which the extremal is not unique. Theorem~\ref{thm:Phi01} shows that there is precisely one such $t$ and that this observation is not specific to $p=1/2$, but in fact holds for any $0<p<1$. Figure~\ref{fig:t_p} shows the value $t_p$ for which the extremal is not unique as a function of $p$.

Inspecting Theorem~\ref{thm:Phi01}, we get the following result similarly to how we extracted Corollary~\ref{cor:dec} from Theorem~\ref{thm:BK1}.

\begin{corollary} \label{cor:inc}
	If $0<p<1$, then the function $t \mapsto \Phi_1(p,t)$ is increasing from $\Phi_1(p,0)=1$ to
	\[\Phi_1\big(p,(1-p/2)^{1/p}\big)=(1-p/2)^{1/p}\frac{2}{\sqrt{p(2-p)}}\]
	and then decreasing to $\Phi_1(p,1)=0$.
\end{corollary}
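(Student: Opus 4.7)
The plan is to invoke Theorem~\ref{thm:Phi01} and track the monotonicity in each of its two cases separately. In case (i), $0 \leq t \leq t_p$, the extremal $f_1$ yields $\Phi_1(p,t) = f_1'(0)$ as in \eqref{eq:f10}, with $\alpha \in [0, \alpha_p] \subset [0, \alpha_2)$. Since $\alpha \mapsto \alpha/(1+\alpha^2)^{1/p}$ is strictly increasing on $[0, \alpha_2]$, it suffices to show that
\[\alpha \longmapsto \frac{1+(2/p-1)\alpha^2}{(1+\alpha^2)^{1/p}}\]
is increasing on $[0, \alpha_2]$. A direct differentiation yields
\[\frac{d}{d\alpha}\!\left[\frac{1+(2/p-1)\alpha^2}{(1+\alpha^2)^{1/p}}\right] = \frac{2\alpha(1-p)\bigl(1 - (2/p-1)\alpha^2\bigr)}{p\,(1+\alpha^2)^{1/p+1}},\]
which is non-negative on $[0,\alpha_2]$ since $(2/p-1)\alpha_2^2 = 1$ and $0 < p < 1$. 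Combined with $\Phi_1(p,0) = 1$, this handles case (i).

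For case (ii), $t_p \leq t \leq 1$, $\Phi_1(p,t) = f_2'(0)$ from \eqref{eq:f20}, with $\beta = \beta(t)$ determined by \eqref{eq:tbeta} and strictly decreasing in $t$. Differentiating $g(\beta) = 2\beta/(p(1+\beta^2)^{1/p})$ gives
\[g'(\beta) = \frac{2\bigl(p - (2-p)\beta^2\bigr)}{p^2(1+\beta^2)^{1/p+1}},\]
which vanishes for $\beta \geq 0$ uniquely at $\beta = \sqrt{p/(2-p)} = \alpha_2$, is positive for $\beta < \alpha_2$, and negative for $\beta > \alpha_2$. Substituting $\beta = \alpha_2$ into \eqref{eq:tbeta} and \eqref{eq:f20} recovers the claimed peak at $t = (1-p/2)^{1/p}$ with value $(1-p/2)^{1/p} \cdot 2/\sqrt{p(2-p)}$.

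It remains to verify that the peak $(1-p/2)^{1/p}$ lies strictly inside the case (ii) interval $(t_p, 1)$, so that the overall shape is as claimed. Lemma~\ref{lem:equation}(c) gives $\alpha_p < \alpha_2$, and the strict monotonicity of $\alpha \mapsto \alpha/(1+\alpha^2)^{1/p}$ on $[0,\alpha_2]$ yields
\[t_p < \frac{\alpha_2}{(1+\alpha_2^2)^{1/p}} = (1-p/2)^{1/p}\sqrt{p/(2-p)} < (1-p/2)^{1/p},\]
the last inequality using $p < 1$. Consequently, as $t$ traverses $[t_p, (1-p/2)^{1/p}]$, $\beta$ decreases from a value strictly above $\alpha_2$ down to $\alpha_2$, over which range $g$ is decreasing in $\beta$; thus $\Phi_1(p,t) = g(\beta(t))$ is increasing in $t$. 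As $t$ traverses $[(1-p/2)^{1/p}, 1]$, $\beta$ decreases from $\alpha_2$ to $0$, where $g$ is increasing in $\beta$, so $\Phi_1(p,t)$ is decreasing in $t$, reaching $0$ at $t=1$. The main obstacle is purely bookkeeping: correctly combining the sign of $g'$ with the monotonic decrease of $\beta(t)$ and confirming that the peak is actually realized inside case (ii) rather than straddling the switch at $t_p$.
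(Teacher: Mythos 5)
Your proposal is correct and follows essentially the same route as the paper: the paper's proof consists of ``inspecting'' Theorem~\ref{thm:Phi01}, i.e.\ tracking $\Phi_1(p,t)$ through the parametrizations \eqref{eq:f10} and \eqref{eq:f20} with $\alpha(t)$ increasing on $[0,t_p]$ and $\beta(t)$ decreasing on $[t_p,1]$, exactly as you do, and your derivative computations and the verification that the peak $\beta=\alpha_2$ (that is, $t=(1-p/2)^{1/p}>t_p$) lies inside case (ii) simply make explicit what the paper leaves to the reader.
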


\section{The extremal problem $\Phi_k(p,t)$ for $k\geq 2$ and $1 \leq p \leq \infty$} \label{sec:Phi1inf}
We begin by recalling how F.~Wiener's trick was used in \cite{BK04} to obtain the solution to the extremal problem $\Phi_k(p,t)$ for $k\geq2$ from Theorem~\ref{thm:BK1}.

\begin{theorem}[Benetau--Korenblum] \label{thm:BK2}
	Let $k\geq2$ be an integer. For every $1 \leq p \leq \infty$ and every $0 \leq t \leq 1$,
	\[\Phi_k(p,t)=\Phi_1(p,t).\]
	If $f_1$ is the extremal function for $\Phi_1(p,t)$, then $f_k(z)=f_1(z^k)$ is an extremal function for $\Phi_k(p,t)$.
\end{theorem}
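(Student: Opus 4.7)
The plan is to prove the two inequalities $\Phi_k(p,t) \leq \Phi_1(p,t)$ and $\Phi_k(p,t) \geq \Phi_1(p,t)$ separately, relying on F.~Wiener's trick (via Theorem~\ref{thm:fwiener}) for the upper bound and on a direct substitution for the lower bound.

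For the lower bound, I would take the extremal $f_1$ for $\Phi_1(p,t)$ supplied by Theorem~\ref{thm:BK1} and set $f_k(z)=f_1(z^k)$. A change of variables on the boundary circle shows that $\|f_k\|_{H^p}=\|f_1\|_{H^p}\leq1$, and clearly $f_k(0)=f_1(0)=t$. Since $f_1(w)=t+\Phi_1(p,t)w+\cdots$, we read off $f_k^{(k)}(0)/k!=f_1'(0)=\Phi_1(p,t)$, so $f_k$ is a valid competitor for $\Phi_k(p,t)$ and attains the value $\Phi_1(p,t)$. This gives $\Phi_k(p,t)\geq\Phi_1(p,t)$.

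For the upper bound, let $f\in H^p$ be any function with $\|f\|_{H^p}\leq1$ and $f(0)=t$, and write $f(z)=\sum_{n\geq0}a_n z^n$. By Theorem~\ref{thm:fwiener}, since $1\leq p\leq\infty$, we have $\|W_k f\|_{H^p}\leq \|f\|_{H^p}\leq1$. Moreover $W_k f(z)=\sum_{n\geq0}a_{kn}z^{kn}$ depends only on $z^k$, so I would define $g(w)=\sum_{n\geq0}a_{kn}w^n$, which satisfies $W_k f(z)=g(z^k)$. The same change of variables as above gives $\|g\|_{H^p}=\|W_k f\|_{H^p}\leq1$, and also $g(0)=a_0=t$ and $g'(0)=a_k=f^{(k)}(0)/k!$. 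Hence $g$ is a competitor for $\Phi_1(p,t)$, yielding $\mre(f^{(k)}(0)/k!)=\mre\,g'(0)\leq\Phi_1(p,t)$. Taking the supremum over $f$ gives $\Phi_k(p,t)\leq\Phi_1(p,t)$.

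The two inequalities together prove the identity $\Phi_k(p,t)=\Phi_1(p,t)$, and the construction in the first paragraph shows that $f_k(z)=f_1(z^k)$ attains the supremum. No step here is a real obstacle: both directions are short once one has Theorem~\ref{thm:fwiener} in hand, and the entire argument amounts to the observation that the map $g\mapsto g(z^k)$ and its left-inverse (well-defined on $k$-periodic power series via $W_k$) are $H^p$-isometries that interchange the $\Phi_1$ and $\Phi_k$ problems.
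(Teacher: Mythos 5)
Your argument is correct and is essentially the paper's own proof: both rest on the two facts that $W_k$ does not increase the $H^p$ norm when $1\leq p\leq\infty$ while preserving $f(0)$ and the $k$th Taylor coefficient, and that $g\mapsto g(z^k)$ is an $H^p$ isometry. The paper merely phrases this by applying $W_k$ to an extremal for $\Phi_k(p,t)$ and reducing to Theorem~\ref{thm:BK1}, whereas you split the same content into the two inequalities $\Phi_k\geq\Phi_1$ and $\Phi_k\leq\Phi_1$; the difference is purely organizational.
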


\begin{proof}
	Suppose that $f$ is an extremal for $\Phi_k(p,t)$. Since $\|W_k f\|_{H^p} \leq \|f\|_{H^p}$, 
	\[f(0)=W_k f(0) \qquad \text{and} \qquad \frac{f^{(k)}(0)}{k!} = \frac{(W_kf)^{(k)}(0)}{k!},\]
	we conclude that $W_k f$ is also an extremal for $\Phi_k(p,t)$. Thus we may restrict our attention to extremals $\widetilde{f_k}$ of the form $\widetilde{f_k}(z)=\widetilde{f}(z^k)$ for $\widetilde{f} \in H^p$. The stated claims now follow at once from Theorem~\ref{thm:BK1}, since $\|\widetilde{f_k}\|_{H^p}=\|\widetilde{f}\|_{H^p}$.
\end{proof}

The purpose of the present section is to answer the following question. For which trios $k\geq2$, $1 \leq p \leq \infty$ and $0 \leq t \leq 1$ is the extremal for $\Phi_k(p,t)$ unique? Note that while Theorem~\ref{thm:BK2} provides an extremal $f_k(z)=f_1(z^k)$ where $f_1$ denotes the extremal from (the statement of) Theorem~\ref{thm:BK1}, it might not be unique.

In the case $1<p<\infty$ it follows at once from Theorem~\ref{thm:fwiener} (b) that this extremal is unique, although it is perhaps easier to use the strict convexity of $H^p$ and Lemma~\ref{lem:norm1} directly. Since $H^p$ is not strictly convex for $p=1$ and $p=\infty$, these cases require further analysis. Note that the case (a) below is certainly known to experts as a consequence of the general theory developed in \cite{Havinson51,MR50,RS53}.

\begin{theorem} \label{thm:unique}
	Consider the extremal problem \eqref{eq:Phi} for $k\geq2$ and $1 \leq p \leq \infty$.
	\begin{enumerate}
		\item[(a)] If $1<p\leq \infty$, then the unique extremal is $f_k(z)=f_1(z^k)$.
		\item[(b)] If $p=1$ and $1/2 \leq t \leq 1$, then the unique extremal is $f_k(z)=f_1(z^k)$.
		\item[(c)] If $p=1$ and $0 \leq t < 1/2$, then the extremals are the functions of the form
			\[f(z) = C \prod_{j=1}^k \big(\lambda_j-z\big)\big(1-\overline{\lambda_j}z\big)\]
			with $|\lambda_j|\leq1$ such that $\|f\|_{H^1}=1$, $f(0)=t$ and $f^{(k)}(0)>0$.
	\end{enumerate}
\end{theorem}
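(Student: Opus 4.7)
My plan is to apply F.~Wiener's trick in a unified way. If $f$ is extremal for $\Phi_k(p,t)$, the identities $W_kf(0) = f(0)$ and $(W_kf)^{(k)}(0) = f^{(k)}(0)$, combined with $\|W_kf\|_{H^p} \leq \|f\|_{H^p} = 1$ and Lemma~\ref{lem:norm1}, show that $W_kf$ is also extremal with unit norm. Since the only non-zero Taylor coefficients of $W_kf$ occur at multiples of $k$, we can write $W_kf(z) = g_0(z) := f_1(z^k)$, where $f_1$ is the unique extremal of $\Phi_1(p,t)$ from Theorem~\ref{thm:BK1}. The remaining task is to determine, in each regime of $p$ and $t$, whether the constraint $W_kf = g_0$ forces $f = g_0$.

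For part \textbf{(a)}, the case $1 < p < \infty$ is immediate from Theorem~\ref{thm:fwiener}\,(b), which says that $\|W_kf\|_{H^p} = \|f\|_{H^p}$ implies $W_kf = f$. For $p = \infty$, I would note that Lemma~\ref{lem:structure} with $p = \infty$ forces both $f$ and $W_kf$ to be finite Blaschke products, hence inner; Lemma~\ref{lem:inner} then gives $W_kf = f$. In either case $f = g_0$.

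For parts \textbf{(b)} and \textbf{(c)}, I would first extract the structural information that equality $\|W_kf\|_{H^1} = \|f\|_{H^1}$ is equivalent to equality in the $L^1$-triangle inequality $\|(1/k)\sum_{j=0}^{k-1}f_j\|_{L^1} = (1/k)\sum_j\|f_j\|_{L^1}$ (with $f_j(z) = f(\omega_k^j z)$), which forces all $f_j$ to share a common unimodular factor $\phi$ a.e., and since $W_kf = g_0$ this common factor must be $\phi = g_0/|g_0|$; thus $\arg f(e^{i\theta}) = \arg g_0(e^{i\theta})$ a.e. In case (b) with $\beta < 1$ (so $t > 1/2$), the polynomial $P(z) := 1 + \beta z^k$ is non-vanishing on $\overline{\mathbb{D}}$ and $\arg g_0 = 2\arg P$ on the circle, so $F := f/P^2$ lies in $H^1$ with non-negative real boundary values; such an $F$ must be a constant (its imaginary part is a harmonic function with vanishing boundary trace), and evaluation at $z = 0$ yields $f = g_0$. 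The boundary case $\beta = 1$ ($t = 1/2$) falls into the Fej\'er--Riesz framework used for (c).

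For part \textbf{(c)}, a direct computation gives $g_0(e^{i\theta}) = e^{ik\theta}(1 + \alpha^2 + 2\alpha\cos k\theta)/(1 + \alpha^2)$, so $\arg g_0 = k\theta$ a.e., forcing $f(e^{i\theta}) = e^{ik\theta}h(\theta)$ with $h \geq 0$. The condition $f \in H^1$ together with the reality of $h$ restricts $h$ to being a non-negative trigonometric polynomial of degree at most $k$; by Fej\'er--Riesz, $h = |q|^2$ for a polynomial $q(z) = A\prod_{j=1}^k(z - \mu_j)$ with $|\mu_j| \leq 1$, whence $f = q\,q^* = C\prod_{j=1}^k(\mu_j - z)(1 - \overline{\mu_j}z)$. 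The converse direction will follow from verifying that such an $f$ automatically satisfies $f^{(k)}(0)/k! = \|q\|_{L^2}^2 = \|f\|_{H^1} = 1 = \Phi_k(1,t)$. The main technical hurdle is the $t = 1/2$ subcase of (b), where the saturation $|\hat h(k)| = \hat h(0)/2$ of the Cauchy--Schwarz inequality $|\hat h(k)| = |a_k\overline{a_0}| \leq (|a_0|^2 + |a_k|^2)/2 \leq \hat h(0)/2$ must be exploited to conclude that $a_j = 0$ for $0 < j < k$ and $|a_0| = |a_k|$, thereby pinning down $q(z) = (1 + z^k)/\sqrt{2}$ up to a unimodular constant and giving $f = g_0$ uniquely.
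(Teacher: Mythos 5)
Your proposal is correct, and for the $p=1$ cases it takes a genuinely different route from the paper; part (a) coincides with the paper's argument (Theorem~\ref{thm:fwiener}\,(b) for $1<p<\infty$, Lemma~\ref{lem:structure} plus Lemma~\ref{lem:inner} for $p=\infty$). For $p=1$ the paper writes an extremal as $f=gh$ via Lemma~\ref{lem:structure} and adapts F.~Riesz's variational argument -- perturbing $g$ in $H^2$ with $|g(0)|$ fixed and using Cauchy--Schwarz -- to arrive at the identity \eqref{eq:gzk}, which forces $l=0$ and $h(z)=\sqrt{t}+\sqrt{1-t}\,z^k$ when $t\geq 1/2$, while (c) is delegated to the known description of extremals for the coefficient functional on $H^1$ (Duren's Exercise~3, p.~143). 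You instead exploit the equality case of the Wiener estimate: since $W_kf$ is again extremal of unit norm (Lemma~\ref{lem:norm1}) and equals $g_0(z)=f_1(z^k)$ by the uniqueness in Theorem~\ref{thm:BK1}, equality in the $L^1$ triangle inequality aligns the boundary phases, $\arg f=\arg g_0$ a.e.; for $t>1/2$ division by the zero-free polynomial $P^2$ reduces matters to the rigidity of an $H^1$ function with nonnegative boundary values, and for $t\leq 1/2$ the nonnegative function $e^{-ik\theta}f(e^{i\theta})$ is a trigonometric polynomial of degree at most $k$, so Fej\'er--Riesz (with the AM--GM saturation pinning down the middle coefficients at $t=1/2$) finishes. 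Your route is self-contained (it in effect reproves Duren's exercise) and makes visible exactly where uniqueness degenerates, namely when $P$ acquires boundary zeros at $t=1/2$ and when the phase condition alone no longer determines $f$ for $t<1/2$; the paper's computation does not need uniqueness of the $\Phi_1$-extremal as an input and is the template reused in Section~\ref{sec:Phi201} for $0<p<1$. Two details should be made explicit in a write-up: the common-phase step uses that nonzero $H^1$ functions have a.e.\ nonvanishing boundary values (the same fact as in Lemma~\ref{lem:Delta01}), and the conclusion that $F=f/P^2$ is constant should be justified via the Poisson representation of $H^1$ functions, since a harmonic function with vanishing boundary trace need not vanish in general.
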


\begin{proof}[Proof of Theorem~\ref{thm:unique} {\normalfont(a)}]
	In view of the discussion above, we need only consider the case $p=\infty$. By Lemma~\ref{lem:structure}, we know that any extremal must be of the form
	\begin{equation} \label{eq:infext}
		f(z) = e^{i\theta} \prod_{j=1}^l \frac{\lambda_j-z}{1-\overline{\lambda_j} z}
	\end{equation}
	for some $0 \leq l \leq k$, constants $\lambda_j \in \mathbb{D}$ and $\theta \in \mathbb{R}$. If $f$ is extremal for $\Phi_k(\infty,t)$, then so is $W_k f$ by Theorem~\ref{thm:BK2}. Consequently, $W_k f$ is also of the form \eqref{eq:infext}. In particular, since both $f$ and $W_k f$ are inner, we get from Lemma~\ref{lem:inner} that $f = W_k f$. From the definition of $W_k$, we know that $f(z)=W_k f(z)=g(z^k)$ for some analytic function $g$. This shows that the only possibility in \eqref{eq:infext} is
	\[f(z) = e^{i\theta} \frac{\lambda-z^k}{1-\overline{\lambda}z^k}\]
	for some $\lambda \in \mathbb{D}$ and $\theta \in \mathbb{R}$. The unique extremal has $\theta=\pi$ and $\lambda=-t$.
\end{proof}

\begin{proof}[Proof of Theorem \ref{thm:unique} {\normalfont(b)}]
	Suppose that $f$ is extremal for $\Phi_k(1,t)$. By rotations, we extend our scope to functions $f$ such that $|f(0)|=t$. In this case, we can use Lemma~\ref{lem:structure} and write $f=gh$ for
	\begin{align*}
		g(z) &= C \prod_{j=1}^l (z+\alpha_j)\prod_{j=l+1}^k(1+\overline{\alpha_j}z), \\
		h(z) &= C \prod_{j=1}^k (1+\overline{\alpha_j}z).
	\end{align*}
    The constant $C>0$ satisfies 
	\[\frac{1}{C^2} = \sum_{j=0}^k \Bigg|\sum_{j_1+j_2+\cdots+j_k = j} \alpha_1^{j_1}\alpha_2^{j_2}\cdots \alpha_k^{j_k}\Bigg|^2 ,\]
	where $j_1,j_2,\ldots,j_k$ take only the values $0$ and $1$. Evidently $\|g\|_{H^2}=\|h\|_{H^2}=1$. Set $A_l = |\alpha_1 \cdots \alpha_l|$ and $B_l = |\alpha_{l+1}\cdots \alpha_k|$. By keeping only the terms $j=0$ and $j=k$ we obtain the trivial estimate
	\begin{equation} \label{eq:trivialest}
		\frac{1}{C^2} \geq 1 + |\alpha_1\alpha_2\cdots \alpha_k|^2 = 1 + A_l^2 B_l^2.
	\end{equation}
	We will adapt an argument due to F.~Riesz \cite{FRiesz19} to get some additional information on the relationship between $g$ and $h$. Write
	\[f(z) = \sum_{j=0}^{2k} a_j z^j, \qquad g(z) = \sum_{j=0}^k b_j z^j \qquad\text{and}\qquad h(z) = \sum_{j=0}^k c_j z^j\]
	and note that $|b_0| = t/|c_0|=t/C$. By the Cauchy product formula we find that
	\begin{equation} \label{eq:cauchy}
		a_k = \sum_{j=0}^k b_j c_{k-j} = t \frac{c_k}{C}\frac{b_0}{|b_0|} + \sum_{j=1}^k b_j c_{k-j}.
	\end{equation}
	Suppose that $\widetilde{g} \in H^2$ satisfies $|\widetilde{g}(0)|=t/C$ and $\|\widetilde{g}\|_{H^2}\leq1$. Define $\widetilde{f}=\widetilde{g}h$. The Cauchy--Schwarz inequality shows that $\|\widetilde{f}\|_{H^1}\leq1$, so the extremality of $f$ implies that $|\widetilde{a}_k|\leq|a_k|$. Inspecting \eqref{eq:cauchy} and using the Cauchy--Schwarz inequality, we find that the optimal $g$ must therefore satisfy
	\begin{equation} \label{eq:goptimal}
		g(z) = \frac{t}{C}\frac{\overline{c_k}}{|c_k|} + \sqrt{\frac{1-t^2/C^2}{1-|c_k|^2}} \sum_{j=1}^k \overline{c_{k-j}}z^j,
	\end{equation}
	where we used that $\|h\|_{H^2}=1$. Using that $c_0=C$, we compare the coefficients for $z^k$ in \eqref{eq:goptimal} with the definition of $g$, to find that
	\[\sqrt{\frac{1-t^2/C^2}{1-|c_k|^2}} C = C \prod_{j=l+1}^{k} \overline{\alpha_j} \qquad \implies \qquad \frac{1-t^2/C^2}{1-|c_k|^2} = B_l^2.\]
	Next we insert $t=C^2 A_l$ from the definition of $f=gh$ and $|c_k|^2 = C^2 A_l^2 B_l^2$ from the definition of $h$ to obtain
	\begin{equation} \label{eq:gzk}
		\frac{1-C^2 A_l^2}{1-C^2 A_l^2 B_l^2} = B_l^2 \qquad \Longleftrightarrow \qquad \frac{(1-B_l^2)(1-C^2 A_l^2(1+B_l^2))}{1-C^2 A_l^2 B_l^2}=0.
	\end{equation}
	The additional information we require is encoded in the equation on the right hand side of \eqref{eq:gzk}.
	
	Suppose that $l\geq1$. Evidently $A_l<1$, since $|\alpha_j|<1$ for $j=1,\ldots,l$ by Lemma~\ref{lem:structure}. It follows that the second factor on the right hand side of \eqref{eq:gzk} can never be $0$, since the trivial estimate \eqref{eq:trivialest} implies that
	\begin{equation} \label{eq:C2est}
		C^2 \leq \frac{1}{1+A_l^2 B_l^2}<\frac{1}{A_l^2(1+B_l^2)}.
	\end{equation}
	From the right hand side of \eqref{eq:gzk} we thus find that $B_l=1$, which shows that $C^2<1/(2 A_l^2)$ by \eqref{eq:C2est}. Since $t = C^2 A_l$, we conclude that $0 \leq t <1/2$. 
	
	By the contrapositive, we have established that if $1/2 \leq t \leq 1$, then the extremal for $\Phi_k(1,t)$ has $l=0$. In this case $A_0=1$ by definition, which shows that $C=\sqrt{t}$. The right hand side of \eqref{eq:gzk} becomes
	\[\frac{(1-B_0^2)(1-t(1+B_0^2))}{1-tB_0^2}=0,\]
	so either $B_0=1$ or $B_0^2 = 1/t-1$. Returning to the definition of $h$ we find that $|c_0|^2 = t$ and $|c_k|^2 = t B_0^2$. Consequently,
	\[1 = \|h\|_{H^2}^2 = t(1+B_0^2)+\sum_{j=1}^{k-1}|c_j|^2.\]
	Since $1/2 \leq t \leq1$, we find that both $B_0=1$ and $B_0^2=1/t-1$ will imply that $c_j=0$ for $j=1,\ldots,k-1$. Thus $h(z)=\sqrt{t}+\sqrt{1-t}\,z^k$. When $l=0$ we have $g=h$, which shows that the unique extremal is
	\[f(z)=\big(\sqrt{t}+\sqrt{1-t}\,z^k\big)^2,\]
	which is of the form $f_k(z)=f_1(z^k)$ as claimed.
\end{proof}

\begin{proof}[Proof of Theorem \ref{thm:unique} \normalfont{(c)}]
	In the case $0\leq t < 1/2$, we know from Theorem~\ref{thm:BK1} and Theorem \ref{thm:BK2} that $\Phi_k(1,t)=1$. See also Figure~\ref{fig:Phi1}. The stated claim follows from Exercise~3 on page~143 of \cite{Duren} by scaling and rotating the function
	\[f(z) = C \prod_{j=1}^k \big(\lambda_j-z\big)\big(1-\overline{\lambda_j}z\big)\]
	to satisfy the conditions $\|f\|_{H^1}=1$, $f(0)>0$ and $f^{(k)}(0)>0$. If the resulting function satisfies $f(0)=t$, then it is an extremal for $\Phi_k(p,t)$ and every extremal is obtained in this way. (This can be established similarly to the case (b) above.)
\end{proof}

\section{The extremal problem $\Phi_k(p,t)$ for $k\geq2$ and $0<p<1$} \label{sec:Phi201}
The purpose of this final section is to record some observations pertaining to the extremal problem \eqref{eq:Phi} in the unresolved case $k\geq2$ and $0<p<1$. 

Suppose that $k\geq0$ and consider the related extremal problem
\[\Psi_k(p) = \sup \left\{\mre\frac{f^{(k)}(0)}{k!}\,:\, \|f\|_{H^p}\leq 1 \right\}.\]
Evidently, $\Psi_0(p)=1$ for every $0<p\leq\infty$ and the unique extremal is $f(z)=1$. Recall (from \cite{BS20} or \cite{Kabaila60}) that the extremals for $\Psi_k$ satisfy a structure result identical to Lemma~\ref{lem:structure}. Note that the parameter $l$ in Lemma~\ref{lem:structure} describes the number of zeroes of the extremal in $\mathbb{D}$. Conjecture~1 from \cite[Sec.~5]{BS20} states that the extremal for $\Psi_k(p)$ does not vanish in $\mathbb{D}$ when $0<p<1$. The conjecture has been verified in the cases $k=0,1,2$ and for $(k,p)=(3,2/3)$.

Let us now suppose that $k\geq1$. There are two obvious connections between the extremal problems $\Phi_k$ and $\Psi_k$. Namely, 
\[\Phi_k(p,0)=\Psi_{k-1}(p) \qquad \text{and} \qquad \max_{0 \leq t \leq 1} \Phi_k(p,t) = \Psi_k(p).\]
Assume that the above-mentioned conjecture from \cite{BS20} holds. This assumption yields that the extremal for $\Phi_k(p,0)$ has precisely one zero in $\mathbb{D}$ and the extremal for the $t$ which maximizes $\Phi_k(p,t)$ does not vanish in $\mathbb{D}$. Note that the extremal for $\Phi_k(p,1)$, which is $f(z)=1$, does not vanish in $\mathbb{D}$.

\begin{question}
	Suppose that $0<p<1$. Is it true that the extremal for $\Phi_k(p,t)$ has at most one zero in $\mathbb{D}$?
\end{question}

We have verified numerically that the question has an affirmative answer for $k=2$. Note that for $1<p\leq\infty$, the extremal for $\Phi_k(p,t)$ either has $0$ or $k$ zeroes in $\mathbb{D}$ by Theorem~\ref{thm:unique} (a). In the case $p=1$, the extremal may have anywhere from $0$ to $k$ zeroes by Theorem~\ref{thm:unique} (b) and (c).

As mentioned in the introduction, Theorem~\ref{thm:fwiener} yields the estimates
\[\Phi_1(p,t) \leq \Phi_k(p,t) \leq k^{1/p-1} \Phi_1(p,t).\]
The upper bound is only attained if $\Phi_1(p,t)=0$ which happens if and only if $t=1$. Of course, since $\Phi_1(p,1)=0$ the lower bound is also attained. 

\begin{question}
	Fix $k\geq2$ and $0<p<1$. Is there some $t_0$ such that $\Phi_k(p,t)=\Phi_1(p,t)$ holds for every $t_0 \leq t \leq 1$?
\end{question}

By a combination of numerical and analytical computations, we have strong evidence that the question has an affirmative answer for $k=2$ and that in this case
\[t_0 = \left(1+\left(\frac{p}{2-p}\right)^2\right)^{1/p}.\]
Let us close by briefly explaining our reasoning. We began by considering the case case $l=0$ in Lemma~\ref{lem:structure}. Setting 
\[\widetilde{f}=\widetilde{g} h^{2/p-1}\]
and arguing as in the proof of Theorem~\ref{thm:unique} (b) (see also \cite{BS20}), we found if $t\geq t_0$, then the only possible extremal for $\Phi_2(p,t)$ with $l=0$ is of the form $f_2(z)=f_1(z^2)$ where $f_1$ is the corresponding extremal for $\Phi_1(p,t)$. Next, if $l=2$ then (as in the case $k=1$) we can only obtain $t$-values in the range $0 \leq t \leq 2^{-1/p} \sqrt{p}(2-p)^{1/p-1/2}$. However, since
\[2^{-1/p} \sqrt{p}(2-p)^{1/p-1/2} < t_0\]
for $0<p<1$ we can ignore the case $l=2$. The case $l=1$ was excluded by numerical computations.

\appendix
\section{Proof of Lemma~\ref{lem:equation}} \label{app:equation}
We will frequently appeal to the following corollary of Rolle's theorem: Suppose that $f$ is continuously differentiable on $[a,b]$ and that $f'(x)=0$ has precisely $n$ solutions on $(a,b)$. Then $f(x)=0$ can have at most $n+1$ solutions on $[a,b]$.

We are interested in solutions of the equation $F_p(\alpha)=0$ on the interval $(0,1)$, where we recall from \eqref{eq:Fpa} that 
\[F_p(\alpha) = p^2 \alpha^{-2}+2p(2-p) + (2-p)^2 \alpha^2 - 4\left(\alpha^{-p}+\alpha^{2-p}-1\right).\]

The initial step in the proof of Lemma~\ref{lem:equation} is to identify the critical points of $F_p$ on the interval $0<\alpha<1$. It turns out that there is only one.
\begin{lemma} \label{lem:prelim1}
	Fix $0<p<1$ and let $F_p$ be as in \eqref{eq:Fpa}. The equation $F_p'(\alpha)=0$ has the unique solution
	\[\alpha = \alpha_2 = \sqrt{\frac{p}{2-p}}\]
	on $0<\alpha<1$.
\end{lemma}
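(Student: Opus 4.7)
The plan is to compute $F_p'(\alpha)$ explicitly and factor out the expected root at $\alpha_2$. Straightforward differentiation gives
\[F_p'(\alpha) = -2p^2\alpha^{-3} + 2(2-p)^2\alpha + 4p\alpha^{-p-1} - 4(2-p)\alpha^{1-p}.\]
The first and second terms combine as $2\alpha^{-3}\bigl[(2-p)\alpha^2 - p\bigr]\bigl[(2-p)\alpha^2 + p\bigr]$ (a difference of squares), while the third and fourth combine as $-4\alpha^{-p-1}\bigl[(2-p)\alpha^2 - p\bigr]$. The common factor $(2-p)\alpha^2 - p$ is exactly the quantity that vanishes at $\alpha_2$, and extracting it yields
\[F_p'(\alpha) = 2\alpha^{-p-1}\bigl[(2-p)\alpha^2 - p\bigr]\bigl[(2-p)\alpha^p + p\alpha^{p-2} - 2\bigr].\]

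The first bracket vanishes precisely at $\alpha = \alpha_2$, and $\alpha_2 \in (0,1)$ because $p/(2-p) < 1$ for $0 < p < 1$. The remaining step is to show that the second bracket has no zero on $(0,1)$. To this end I would introduce
\[G(\alpha) = (2-p)\alpha^p + p\alpha^{p-2}\]
and compute $G'(\alpha) = p(2-p)\alpha^{p-3}(\alpha^2 - 1)$, which is strictly negative on $(0,1)$. Hence $G$ is strictly decreasing there, and since a direct evaluation shows $G(1) = (2-p) + p = 2$, we conclude that $G(\alpha) > 2$ for every $\alpha \in (0,1)$. The second bracket is therefore strictly positive on $(0,1)$, which rules out any critical point other than $\alpha_2$. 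The prefactor $2\alpha^{-p-1}$ is positive throughout, so no spurious zeros are lost or gained.

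The main obstacle is really just spotting the factorization; once the shared factor $(2-p)\alpha^2 - p$ has been isolated, both remaining analyses are elementary. A brief remark at the end would also record that $F_p'$ changes sign at $\alpha_2$ (from positive to negative, since the leading behavior of $F_p$ at $0^+$ is $+\infty$ while $F_p(1) = 0$), which will be the key qualitative input for the parts (a)--(c) of Lemma~\ref{lem:equation} proved subsequently.
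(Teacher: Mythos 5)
Your proof is correct, and it takes a genuinely different route from the paper. You factor $F_p'$ completely: both the algebraic identity $F_p'(\alpha)=2\alpha^{-p-1}\bigl[(2-p)\alpha^2-p\bigr]\bigl[(2-p)\alpha^p+p\alpha^{p-2}-2\bigr]$ and the positivity of the second bracket on $(0,1)$ (via $G'(\alpha)=p(2-p)\alpha^{p-3}(\alpha^2-1)<0$ and $G(1)=2$) check out. The paper instead multiplies $F_p'$ by $\alpha^{1+p}/(2(2-p))$ to form an auxiliary function $G_p$, shows $G_p'$ changes sign at most once on $(0,1)$ by locating its unique interior critical point, and then uses $G_p(0^+)=-\infty$ and a Rolle-type count to conclude $G_p=0$ has at most two roots on $(0,1]$, identified as $\alpha_2$ and $1$. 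Your factorization is more direct: it avoids the zero-counting argument entirely and yields as a byproduct the sign of $F_p'$ on each side of $\alpha_2$, which is exactly the qualitative information needed downstream in Lemma~\ref{lem:equation}. One small correction to your closing remark: the sign change of $F_p'$ at $\alpha_2$ is from \emph{negative to positive}, not positive to negative. Indeed, your own factorization shows $(2-p)\alpha^2-p<0$ for $\alpha<\alpha_2$ and $>0$ for $\alpha>\alpha_2$ while the other two factors are positive, so $\alpha_2$ is a minimum of $F_p$; this is also forced by $F_p(0^+)=+\infty$ (a function tending to $+\infty$ at $0^+$ cannot be increasing on $(0,\alpha_2)$) and is consistent with $F_p(\alpha_2)<0$ and $F_p(1)=0$ in the subsequent lemmas. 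This slip does not affect the statement being proved.
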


\begin{proof}
	We begin by computing
	\[F_p'(\alpha) = -2p^2 \alpha^{-3}+2(2-p)^2\alpha +4p \alpha^{-p-1}-4(2-p)\alpha^{1-p}.\]
	The solutions of the equation $F_p'(\alpha)=0$ on $0<\alpha<1$ do not change if we multiply both sides by $\alpha^{1+p}/(4-2p)$. Hence, we consider the equation $G_p(\alpha)=0$, where
	\[G_p(\alpha) = \frac{\alpha^{1+p}}{2(2-p)} F_p'(\alpha) = -\frac{p^2}{2-p} \alpha^{p-2} + (2-p) \alpha^{2+p}+\frac{2p}{2-p}-2\alpha^2.\]
	Evidently,
	\[G_p'(\alpha) = \alpha \left(p^2 \alpha^{p-4} + (4-p^2) \alpha^p-4\right),\]
	and the sign of $G_p'(\alpha)$ is the same as the sign of $p^2 \alpha^{p-4} + (4-p^2) \alpha^p-4$. Since
	\[\frac{d}{d\alpha} \left(p^2 \alpha^{p-4} + (4-p^2) \alpha^p-4\right) = 0 \qquad \Longleftrightarrow \qquad \alpha = \sqrt[4]{\frac{4p-p^2}{4-p^2}},\]
	and since $G_p'(1)=0$, we conclude that $G_p'$ changes sign at most once on $0<\alpha<1$. Since $G_p(0)= -\infty$, this means that $G_p(\alpha)=0$ can have at most two solutions on $(0,1]$. Hence $F_p'(\alpha)=0$ can have at most two solutions on $(0,1]$. It is easy to verify that these solutions are
	\[\alpha = \sqrt{\frac{p}{2-p}} \qquad \text{and} \qquad \alpha=1,\]
	and hence the proof is complete.
\end{proof}

We next want to demonstrate that $F_\alpha(\alpha_1)>0$ and $F_\alpha(\alpha_2)<0$ where $\alpha_1$ and $\alpha_2$ are from \eqref{eq:alpha1} and \eqref{eq:alpha2}, respectively. 

\begin{lemma} \label{lem:prelim2}
	Fix $0<p<1$. If $\alpha_2=\sqrt{\frac{p}{2-p}}$, then $F_p(\alpha_2)<0$.
\end{lemma}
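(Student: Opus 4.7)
The plan is to exploit the fact that $\alpha=1$ is a very degenerate zero of $F_p$, combined with Lemma~\ref{lem:prelim1}. The first step is the routine verification that $F_p(1) = 0$ (which follows from $(p+(2-p))^2 = 4$). Next, as already noted in the proof of Lemma~\ref{lem:prelim1}, one has $F_p'(1) = 0$ (this factored as $\alpha=1$ being one of the two roots of $G_p(\alpha)=0$ on $(0,1]$). So $\alpha=1$ is at least a double zero.

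The key computation will be to differentiate once more and find that also $F_p''(1) = 0$, and then to compute
\[
F_p'''(1) = -24p^2 + 4p(p+1)(p+2) + 4p(2-p)(1-p) = 8p(p-1)(p-2).
\]
Since $0<p<1$, each of the factors $p$, $p-1$, $p-2$ has a determined sign, giving $F_p'''(1) > 0$. These are all elementary algebraic checks, and I would simply present the derivatives of each term of \eqref{eq:Fpa} and evaluate.

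With this in hand, Taylor expansion at $\alpha=1$ gives $F_p'(\alpha) = \tfrac{1}{2} F_p'''(1)(\alpha-1)^2 + O((\alpha-1)^3)$, so $F_p'(\alpha) > 0$ for all $\alpha$ strictly below but sufficiently near $1$. By Lemma~\ref{lem:prelim1}, the only critical point of $F_p$ on $(0,1)$ is $\alpha_2 = \sqrt{p/(2-p)}$, and since $p<1$ implies $\alpha_2<1$, the continuous function $F_p'$ cannot change sign on the interval $(\alpha_2,1)$. Combined with the Taylor sign computation, this forces $F_p' > 0$ on the whole of $(\alpha_2,1)$.

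Therefore $F_p$ is strictly increasing on $[\alpha_2,1]$, and we conclude
\[
F_p(\alpha_2) < F_p(1) = 0,
\]
which is the desired inequality. The main obstacle here is simply the bookkeeping in the second- and third-derivative computations at $\alpha=1$, but the conceptual content is small: $\alpha=1$ is a triple zero of $F_p$ with $F_p'''(1)>0$, so the curve crosses $0$ from below at $\alpha=1$, and Lemma~\ref{lem:prelim1} forces this sign to propagate all the way back to the unique critical point $\alpha_2$.
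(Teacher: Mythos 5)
Your proof is correct, and it takes a genuinely different route from the paper. The paper substitutes $\alpha=\alpha_2$ directly and studies the resulting function of $p$, namely $H(p)=2-(1+2p-p^2)p^{p/2}(2-p)^{(2-p)/2}$, proving $H>0$ on $(0,1)$ via a two-stage Rolle-type analysis of the auxiliary functions $H$ and $I(p)=\tfrac{4(1-p)}{1+2p-p^2}+\log\bigl(\tfrac{p}{2-p}\bigr)$, including the numerical checkpoints $H(1/2)>0$ and $I(1-\sqrt{2/3})>0$. You instead work in the variable $\alpha$ near the boundary point $\alpha=1$: your derivative computations are right, namely $F_p(1)=F_p'(1)=F_p''(1)=0$ and
\[
F_p'''(1)=-24p^2+4p(p+1)(p+2)+4p(2-p)(1-p)=8p(p-1)(p-2)>0
\]
for $0<p<1$, so $F_p'(\alpha)=\tfrac12 F_p'''(1)(\alpha-1)^2+O((\alpha-1)^3)$ is positive just to the left of $1$; since Lemma~\ref{lem:prelim1} guarantees $F_p'$ has no zero in $(\alpha_2,1)$, the intermediate value theorem forces $F_p'>0$ on all of $(\alpha_2,1)$, hence $F_p(\alpha_2)<F_p(1)=0$. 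There is no circularity, as Lemma~\ref{lem:prelim1} does not depend on Lemma~\ref{lem:prelim2}. What your approach buys is economy: all the work is in purely polynomial identities at $\alpha=1$, and you avoid the transcendental analysis in $p$ (the logarithmic functions $H$, $I$, $I'$ and the attendant special-value checks); it also reuses Lemma~\ref{lem:prelim1}, which is needed anyway for Lemma~\ref{lem:equation}. What the paper's route buys is an explicit, self-contained reformulation $F_p(\alpha_2)<0\Longleftrightarrow H(p)>0$ in the single variable $p$, at the cost of a considerably longer calculus argument.
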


\begin{proof}
	We begin reformulating the inequality $F_p(\alpha_2)<0$ as $H(p)>0$, for 
	\[H(p)=-\frac{2-p}{4} \alpha_2^p F_p(\alpha_2) = 2 - \left(1+2p-p^2\right) p^{p/2}(2-p)^{(2-p)/2}.\]
	Since we have $H(0)=H(1)=0$, it is sufficient to prove that the function $H$ has precisely one critical point on $0<p<1$ and that it is strictly positive for some $0<p<1$. We first check that
	\[H(1/2) = \frac{16-7\cdot 3^{3/4}}{8}>0.\]
	We then compute
	\[H'(p) = -p^{p/2}(2-p)^{(2-p)/2}\left(2(1-p) + \frac{\left(1+2p-p^2\right)}{2} \log\left(\frac{p}{2-p}\right)\right).\]
	The first factor is non-zero, so we therefore need to check that the equation $I(p)=0$ has only one solution on $0<p<1$, where
	\[I(p)= \frac{4(1-p)}{1+2p-p^2} + \log\left(\frac{p}{2-p}\right).\]
	We compute
	\[I'(p) = \frac{-4\left(3-2p+p^2\right)}{\left(1+2p-p^2\right)^2} + \frac{2}{p(2-p)} = \frac{2(1-p)^2 \left(3p^2-6p+1\right)}{p(2-p)\left(1+2p-p^2\right)^2}.\]
	Hence $I'(p)=0$ has the unique solution $p_0=1-\sqrt{2/3}$ on the interval $0<p<1$. Noting that $I(0)=-\infty$ and $I(1)=0$, we conclude by verifying that
	\[I(p_0) = \sqrt{6}+\log\big(5-2\sqrt{6}\,\big)>0\]
	which demonstrates that $I(p)=0$ has a unique solution on $0<p<1$.
\end{proof}

\begin{lemma} \label{lem:prelim3}
	Fix $0<p<1$. Let $\alpha_1$ denote the unique solution of the equation $1-2\alpha^p+\alpha^2=0$ on the interval $(0,1)$. Then $F_p(\alpha_1)>0$.
\end{lemma}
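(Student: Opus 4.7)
The plan is to exploit the defining relation $1 + \alpha_1^2 = 2\alpha_1^p$ to radically simplify $F_p(\alpha_1)$. Dividing this relation by $\alpha_1^p$ yields $\alpha_1^{-p} + \alpha_1^{2-p} = 2$, so the quantity $\alpha^{-p} + \alpha^{2-p} - 1$ appearing in \eqref{eq:Fpa} equals $1$ at $\alpha_1$, contributing $-4$ to $F_p(\alpha_1)$. The remaining three terms of \eqref{eq:Fpa} combine into the perfect square $(p\alpha^{-1} + (2-p)\alpha)^2$, leaving
\[F_p(\alpha_1) = \bigl(p\alpha_1^{-1} + (2-p)\alpha_1\bigr)^2 - 4.\]

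Since $p\alpha_1^{-1} + (2-p)\alpha_1 > 0$, the inequality $F_p(\alpha_1) > 0$ is equivalent to $(2-p)\alpha_1^2 - 2\alpha_1 + p > 0$. Factoring via the quadratic formula gives
\[(2-p)\alpha^2 - 2\alpha + p = (2-p)(\alpha - 1)\bigl(\alpha - p/(2-p)\bigr),\]
and since $\alpha_1 \in (0,1)$ this expression is positive precisely when $\alpha_1 < p/(2-p)$, reducing the lemma to that single inequality.

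To pin down $\alpha_1$, I would study $g(\alpha) = 2\alpha^p - 1 - \alpha^2$, noting that $g'(\alpha) = 2p\alpha^{p-1} - 2\alpha$ has the unique critical point $\alpha_\ast = p^{1/(2-p)}$ on $(0,\infty)$. Together with $g(0^+) = -1$, $g(\alpha_1) = g(1) = 0$, and $g'(1) = 2p-2 < 0$, this forces $g > 0$ precisely on $(\alpha_1, 1)$, so $\alpha_1 < p/(2-p)$ will follow once I verify $g\bigl(p/(2-p)\bigr) > 0$. Setting $q = p/(2-p)$, so $p = 2q/(1+q)$, the inequality becomes
\[K(q) := \log\frac{2}{1+q^2} + \frac{2q}{1+q}\log q > 0, \qquad q \in (0,1).\]

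The hard part is proving $K(q) > 0$: a short calculation shows $K(1) = K'(1) = K''(1) = K'''(1) = 0$, so naive Taylor arguments at $q = 1$ are insufficient. To handle this cleanly, I plan to substitute $q = e^{-t}$ and rewrite $K(e^{-t})$ as
\[H(t) = \log 2 + 2t - \log(e^{2t}+1) - \frac{2t}{e^t+1}, \qquad H(0) = 0.\]
Differentiation and algebraic simplification over a common denominator collapses neatly to
\[H'(t) = \frac{2e^t}{(e^t+1)^2}\,(t - \tanh t),\]
and the classical inequality $t > \tanh t$ for $t > 0$ (immediate from $(t - \tanh t)' = \tanh^2 t > 0$) delivers $H'(t) > 0$, hence $H(t) > 0$ on $(0,\infty)$. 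Translating back gives $K(q) > 0$ on $(0,1)$, and therefore $F_p(\alpha_1) > 0$.
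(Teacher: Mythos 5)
Your proof is correct, and while its first half retraces the paper's reduction, the decisive step is handled by a genuinely different argument. The reduction is the same: the relation $1+\alpha_1^2=2\alpha_1^p$ turns the last bracket of \eqref{eq:Fpa} into $1$, the remaining three terms form the square $\bigl(p\alpha_1^{-1}+(2-p)\alpha_1\bigr)^2$, and the unimodal shape of $\alpha\mapsto 1-2\alpha^p+\alpha^2$ reduces everything to the single inequality $\alpha_1<p/(2-p)$, i.e.\ to $2q^p>1+q^2$ at $q=p/(2-p)$; the paper reaches exactly this point by factoring the quadratic into three linear factors rather than as a difference of squares (and your critical point $p^{1/(2-p)}$ has the correct exponent, whereas the paper's proof writes $p^{2-p}$ at the corresponding spot). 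The genuine divergence is in settling that key inequality. The paper clears denominators, sets $K(p)=2-2p+p^2-p^p(2-p)^{2-p}$, and proves $K<0$ on $(0,1)$ via Taylor's theorem at $p=1$, which requires showing $K''<0$ there by a further monotonicity argument involving two auxiliary factors that must be shown positive and decreasing. You instead take logarithms and substitute $q=p/(2-p)=e^{-t}$, after which the derivative collapses to $H'(t)=\frac{2e^t}{(e^t+1)^2}\,(t-\tanh t)$ --- I checked this identity and it is correct --- so positivity follows from the classical bound $t>\tanh t$, and hence $H>0$ for $t>0$. Your route trades the paper's two rounds of differentiation and sign bookkeeping for one closed-form derivative plus an elementary inequality, and your observation that the logarithmic form vanishes to fourth order at $q=1$ (which I verified) correctly signals why naive expansion at that endpoint cannot work; the paper's version, in exchange, avoids any change of variables and stays with elementary functions of $p$ throughout. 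Both arguments are complete; yours is the shorter verification of the crucial inequality.
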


\begin{proof}
	Using the equation defining $\alpha_1$, we see that $\alpha_1^{-p}+\alpha_1^{2-p}-1=1$. Hence,
	\begin{align*}
		F_p(\alpha_1) &= \frac{p^2}{\alpha_1^2}+2p(2-p)+(2-p)^2\alpha_1^2-4 \\ 
		&= \left(\frac{p}{\alpha_1}+\alpha_1(2-p)+2\right)\left(\frac{1}{\alpha_1}-1\right)\left(p-\alpha_1(2-p)\right).
	\end{align*}
	The first two factors are strictly positive for every $0<\alpha_1<1$ and every $0<p<1$. Consequently, $F_p(\alpha_1)>0$ if and only if $\alpha_1<p/(2-p)$. The function 
	\[J_p(\alpha)=1-2\alpha^p+\alpha^2\]
	satisfies $J_p(0)=1$ and $J_p(1)=0$. Moreover, $J_p$ is strictly decreasing on $(0,p^{2-p})$ and strictly increasing on $(p^{2-p},1)$. Since $\alpha_1$ is the unique solution to $J_p(\alpha)=0$ for $0<\alpha<1$, the desired inequality $\alpha_1<p/(2-p)$ is equivalent to
	\[0>J_p\left(\frac{p}{2-p}\right)=1-2\left(\frac{p}{2-p}\right)^p + \left(\frac{p}{2-p}\right)^2.\]
	In order to establish this inequality, we multiply by $(2-p)^2/2$ on both sides to get the equivalent inequality $K(p)<0$, where
	\[K(p) = 2-2p+p^2 - p^p (2-p)^{2-p}.\]
	Our plan is to use Taylor's theorem to write
	\[K(p) = K(1)+K'(1)(p-1)+\frac{K''(\eta)}{2}\,(p-1)^2\]
	where $0<p<\eta<1$. The claim will follow if we can prove that $K(1)=K'(1)=0$ and $K''(p)<0$ for $0<p<1$. Hence we compute
	\begin{align*}
		K'(p) &= -2+2p - p^p (2-p)^{2-p} \log\left(\frac{p}{2-p}\right), \\
		K''(p) &= 2 - p^p(2-p)^{2-p} \left(\log^2\left(\frac{p}{2-p}\right)+\frac{2}{p(2-p)}\right).
	\end{align*}
	Evidently, $K(1)=K'(1)=K''(1)=0$. Hence we are done if we can prove that $K''$ is strictly increasing on $0<p<1$. This will follow once we verify that both 
	\[p^p (2-p)^{2-p} \qquad \text{and} \qquad \log^2\left(\frac{p}{2-p}\right)+\frac{2}{p(2-p)}\]
	are strictly positive and strictly decreasing on $0<p<1$. Strict positivity is obvious. The first function is strictly decreasing since
	\[\frac{d}{dp} \left( p^p (2-p)^{2-p} \right)= p^p (2-p)^{2-p} \log\left(\frac{p}{2-p}\right) \]
	and $\log(p/(2-p))<0$ for $0<p<1$. For the second function, we check that
	\[\frac{d}{dp}\left(\log^2\left(\frac{p}{2-p}\right)+\frac{2}{p(2-p)}\right) = \frac{4}{p^2}\left(\frac{p}{2-p}\log\left(\frac{p}{2-p}\right)+\frac{p-1}{(2-p)^2}\right)<0,\]
	where for the final inequality we have again used that $\log(p/(2-p))<0$.
\end{proof}

We can finally wrap up the proof of Lemma~\ref{lem:equation}.

\begin{proof}[Proof of Lemma~\ref{lem:equation}]
	By Lemma~\ref{lem:prelim1} we know that $F_p'(\alpha)=0$ has precisely one solution for $0<\alpha<1$. Since $F_p(0)=\infty$ and $F_p(1)=0$, this implies that the equation $F_p(\alpha)=0$ can have at most one solution on the interval $(0,1)$. Lemma~\ref{lem:prelim2} shows that there is exactly one solution, since $F_p(\alpha_2)<0$. Let $\alpha_p$ denote this solution. Inspecting the endpoints again, we find that $F_p(\alpha)>0$ for $0<\alpha<\alpha_p$ and $F_p(\alpha)<0$ for $\alpha_p<\alpha<1$. Using Lemma~\ref{lem:prelim2} again we conclude that $\alpha_p<\alpha_2$, while the inequality $\alpha_1<\alpha_p$ follows similarly from Lemma~\ref{lem:prelim3}.
\end{proof}

\bibliographystyle{amsplain} 
\bibliography{1extremal}

\end{document}